\documentclass[a4paper,reqno,oneside]{amsart}

\usepackage[T1]{fontenc}
\usepackage[utf8]{inputenc}
\usepackage[main=english,german]{babel}
\usepackage{quiver}
\usepackage{dsfont}
\usepackage{amscd,amssymb,amsthm,amsfonts}
\usepackage{mathtools}
\usepackage{stmaryrd} 
\usepackage{graphicx}
\usepackage{svg}
\usepackage{tikz}
\usepackage{tikz-cd}
\usetikzlibrary{calc,matrix,arrows,arrows.meta,decorations.pathmorphing,positioning,braids}
\usepackage[normalem]{ulem}
\usepackage[a4paper]{geometry}
\usetikzlibrary{babel}
\usepackage{hyperref}
\hypersetup{colorlinks=true,pageanchor=false,linkcolor=blue, citecolor=brown, urlcolor=red}
\usepackage[all]{hypcap}

\allowdisplaybreaks

\tikzset{
  strand/.style={line width=1.2pt, line cap=round},
  unit/.style={circle, draw, line width=0.9pt, inner sep=1.2pt, fill=white},
  eq/.style={font=\Large},
  open dot/.style={circle,draw,fill=white,inner sep=1pt},
  closed dot/.style={circle,fill=black,inner sep=1pt},
coupon/.style={draw,fill=white,rounded corners=1pt,minimum width=6pt,minimum height=4pt,inner sep=1pt}
}

\newtheorem{theorem}{Theorem}[section]

\newtheorem{lemma}[theorem]{Lemma}

\theoremstyle{definition}
\newtheorem{definition}[theorem]{Definition}
\newtheorem{example}[theorem]{Example}

\theoremstyle{remark}
\newtheorem{remark}[theorem]{Remark}

\newcommand{\N}{\mathbb{N}}

\renewcommand{\epsilon}{\varepsilon}
\renewcommand{\theta}{\vartheta}
\renewcommand{\phi}{\varphi}
\renewcommand{\Gamma}{\varGamma}
\renewcommand{\Sigma}{\varSigma}

\newcommand{\id}{\mathrm{id}}

\makeatletter
\newcommand{\subalign}[1]{
  \vcenter{
    \Let@ \restore@math@cr \default@tag
    \baselineskip\fontdimen10 \scriptfont\tw@
    \advance\baselineskip\fontdimen12 \scriptfont\tw@
    \lineskip\thr@@\fontdimen8 \scriptfont\thr@@
    \lineskiplimit\lineskip
    \ialign{\hfil$\m@th\scriptstyle##$&$\m@th\scriptstyle{}##$\crcr
      #1\crcr
    }
  }
}
\makeatother

\def\clap#1{\hbox to 0pt{\hss#1\hss}}

\newcommand\arxiv[2]{\href{https://arXiv.org/abs/#1}{\texttt{arXiv:\allowbreak #1} #2}}
\newcommand\doi[2]{\href{https://doi.org/#1}{#2}}

\makeatletter
\def\namedlabel#1#2{\begingroup
    #2%
    \def\@currentlabel{#2}%
    \phantomsection\label{#1}\endgroup
}
\makeatother

\numberwithin{equation}{section}

\begin{document}
	\title{Dynamical Systems as Functorial Realisations of Abstract Evolution Shapes}

	\author[B.~Wang]{Bangxin Wang}
	
	\thanks{Fachbereich Mathematik, Universit\"at Hamburg, Bundesstr.\ 55,
    20146 Hamburg, Germany.\\
	\texttt{bangxin.wang@uni-hamburg.de}}

\begin{abstract}
We develop a categorical framework for closed dynamical systems in which the
abstract pattern of admissible evolutions is separated from its concrete
realisation. A closed dynamical system is formulated as a functor
$X\colon \mathcal S\to\mathcal C$ from a small category $\mathcal S$,
viewed as an abstract evolution shape, to a coefficient category
$\mathcal C$. By varying $\mathcal S$ and $\mathcal C$, this single
definition encompasses many important examples including autonomous, non-autonomous, switched, hybrid, and
stochastic systems.

Within this framework, we introduce invariant subsystems, equilibria, and orbits in functorial terms. We then formulate convergence by combining a cosieve-based intrinsic notion of eventuality on the evolution shape with neighbourhood filters of invariant subsystems.

Finally, we establish a categorical Lyapunov principle based on categorical sublevel neighbourhoods. This yields abstract stability and convergence criteria that recover the classical Lyapunov method in standard examples.
\end{abstract}
	
	\maketitle
	\setcounter{tocdepth}{2}
	{\small
		\vspace*{-3em}
		\tableofcontents
	}

    \clearpage

    \section{Introduction}

Dynamical systems provide a common language for time-dependent phenomena
across mathematics, physics, engineering, and biology; see, for example,
\cite{BS02,HSD13,St18,Kh02}. Familiar examples include continuous-time
flows, discrete-time iterated maps, switched systems,
hybrid systems, and stochastic systems. These models differ substantially in
their analytic details. Nevertheless, they share a basic structural feature: they specify admissible evolutions, and admissible evolutions can be composed.

This compositional feature suggests a categorical point of view on dynamical systems. Several categorical approaches to dynamics have been developed from different perspectives. In early categorical work, dynamical systems already appear as examples of structures associated with monoids and actions~\cite{La86}. A related approach views a dynamical system as an action of a time object on a state object internal to a category with finite products~\cite{BKPSS17}. Another line of work emphasizes openness and interfaces: systems are equipped with input-output boundaries and are composed by connecting these boundaries. This appears in operadic wiring-diagram approaches to open dynamical systems~\cite{VSL15}, as well as in cospan-based approaches to open systems, where interfaces are represented by the feet of a cospan and composition is given by gluing along shared interfaces~\cite{BC19,BCV21}. A control-theoretic input-state-output perspective treats inputs as external
control parameters acting on the internal state, while outputs are read from the resulting state. Categorical versions of this idea have been developed in~\cite{BE14,My20,My23,LM25}.

\medskip

In the present paper, we take a different but related perspective. We do not begin with a specific class of systems, such as flows or monoid actions. Nor do we begin with a fixed interface-based formalism, such as wiring diagrams or operadic composition. Instead, we isolate the abstract pattern of admissible evolutions and then study its concrete realisations in different categories of state spaces.

The abstract pattern of admissible evolutions will be called the \textit{syntax} of
the system. It is encoded by a small category $\mathcal S$, whose objects represent stages, times, modes, or configurations, and whose
morphisms represent admissible evolutions between them. A concrete dynamical
system is then a realisation of this syntax in a chosen coefficient category $\mathcal C$ of state spaces. This is the \textit{semantics} of the system. Thus, a categorical dynamical system for us is a functor
\begin{equation}
    X\colon \mathcal S\longrightarrow \mathcal C \ .
\end{equation}
The functor $X$ assigns to each stage $s\in\mathcal S$ a state space
$X(s)\in\mathcal C$, and to each admissible evolution
$\alpha\colon s\to t$ a morphism
\begin{equation}
    X(\alpha)\colon X(s)\to X(t) \ .
\end{equation}
Functoriality expresses compatibility with composition.

This syntax--semantics separation is the organising principle of the paper:
$\mathcal S$ specifies which evolutions are allowed, while $\mathcal C$
specifies the kind of state spaces and maps used to realize them. This allows many familiar classes of systems to
be treated uniformly. In Sections~\ref{sec: monoid type},~\ref{sec: poset type model},~\ref{sec: switching systems}, and~\ref{sec: stochastic models}, we explain
how this framework recovers several classical examples.

\medskip

Under this framework, we develop a categorical language for the basic notions
of dynamics. The first step is to formulate \textit{states}, \textit{subsystems}, \textit{invariance},
\textit{orbits}, and \textit{convergence} without referring to a particular analytic model. A state of the system at a stage $s\in\mathcal S$ is a morphism
\begin{equation}
    x\colon 1\to X(s)
\end{equation}
from the terminal object of $\mathcal C$. More generally, an
\textit{$A$-generalised state} is a morphism
\begin{equation}
    a\colon A\to X(s) \ .
\end{equation}
This allows one to treat points, families of points, subsets, distributions,
or other generalised configurations in the same formal language, depending
on the choice of the coefficient category $\mathcal C$.

Subobjects provide the categorical replacement for subsets of state spaces.
A subsystem of $X$ is a pair
\begin{equation}
    (U,\eta)
\end{equation}
consisting of a functor
\begin{equation}
    U\colon \mathcal S\to\mathcal C
\end{equation}
and a natural transformation
\begin{equation}
    \eta\colon U\Longrightarrow X
\end{equation}
such that each component
\begin{equation}
    \eta_s\colon U(s)\to X(s)
\end{equation}
is a monomorphism. Naturality means that admissible evolutions of $X$ carry
the subsystem at one stage into the subsystem at every later admissible
stage. A particularly important case is that of an \textit{invariant subsystem}. This is a subsystem whose underlying object does not vary with $s$: it is a natural transformation
\begin{equation}
    \iota\colon \Delta I\Longrightarrow X
\end{equation}
from the constant functor at an object $I\in\mathcal C$, with each component
\begin{equation}
    \iota_s\colon I\to X(s)
\end{equation}
monic. The naturality condition says that, for every admissible evolution
$\alpha\colon s\to t$, the diagram
\begin{equation}
\begin{tikzcd}
    I \arrow[r,"\iota_s"] \arrow[dr,"\iota_t"']
        & X(s) \arrow[d,"X(\alpha)"] \\
        & X(t)
\end{tikzcd}
\end{equation}
commutes.

In many examples, especially non-autonomous or mode-dependent systems, one chooses a \textit{state-space identification}. This means an object $M\in \mathcal C$ together with isomorphisms
\begin{equation}
    \theta_s\colon M\xrightarrow{\sim}X(s) \ ,
    \qquad s\in\mathcal S \ .
\end{equation}
Such a identification allows one to compare states living at different
stages by identifying all state spaces with the common model $M$.

With this terminology, an invariant state is an invariant subsystem with
$I=1$. Equivalently, it is a natural transformation
\begin{equation}
    \iota\colon \Delta 1\Longrightarrow X \ .
\end{equation}
If a state-space identification is fixed, an \textit{equilibrium} is a state
\begin{equation}
    x_\infty\colon 1\to M
\end{equation}
such that the associated states
\begin{equation}
    \iota_s=\theta_s\circ x_\infty\colon 1\to X(s)
\end{equation}
form an invariant state of $X$.

To formulate convergence in this language, we need to make precise two
ingredients which are often implicit in classical examples. First, one has to
specify what it means to go sufficiently far along the allowed evolutions.
Second, one has to specify what it means to be sufficiently close to the
limiting object.

The first ingredient is encoded by an \textit{eventuality filter} on the
source category $\mathcal S$. For each stage $s\in\mathcal S$, this filter
selects certain collections of morphisms with source $s$,
\begin{equation}
    \alpha\colon s\to t \ ,
\end{equation}
which are to be regarded as sufficiently far along the admissible evolutions
starting from $s$. Thus an eventuality filter plays the role of a
neighbourhood filter at infinity on the set of outgoing evolutions from each stage. The second ingredient is encoded by a neighbourhood filter of an invariant
subsystem
\begin{equation}
    \iota\colon \Delta I\Longrightarrow X \ .
\end{equation}
A neighbourhood of $I$ is given stagewise by monomorphisms
\begin{equation}
    U_t\hookrightarrow X(t)
\end{equation}
through which the component $\iota_t\colon I\to X(t)$ factors.

With these two filters in place, convergence can be expressed purely in terms of factorisation as in Definition~\ref{def: convergence}. Let
\begin{equation}
    a\colon A\to X(s)
\end{equation}
be an $A$-generalised state over $s$. We say that $a$ converges to the
invariant subsystem $I$ if, for every neighbourhood
\begin{equation}
    U=(U_t\hookrightarrow X(t))_{t\in\mathcal S}
\end{equation}
of $I$, the evolved state
\begin{equation}
    A\xrightarrow{a}X(s)\xrightarrow{X(\alpha)}X(t)
\end{equation}
eventually factors through
\begin{equation}
    U_t\hookrightarrow X(t) \ .
\end{equation}

A feature of this formulation is that it separates the formal
structure of convergence from the analytic structures used in concrete
examples. The eventuality filter belongs to the syntax: it specifies which
admissible evolutions are regarded as sufficiently far along. The
neighbourhood filter belongs to the semantics: it specifies what closeness to
the invariant subsystem means in the chosen coefficient category. In
Example~\ref{ex: convergence discrete non-auto}, we consider discrete-time
non-autonomous systems, where the eventuality filter is the usual tail filter on time and the neighbourhood filter is the ordinary topological
neighbourhood filter. In this case, Definition~\ref{def: convergence}
recovers the classical notion of convergence.

\medskip

The final part of the paper develops a categorical version of the Lyapunov
method. Classically, Lyapunov theory provides a way to prove stability and
convergence without explicitly solving the equations of motion. The basic
idea is to find a nonnegative function which vanishes on the desired
equilibria and which does not increase along trajectories;
see, for example, \cite[Ch.~4]{Kh02}. Recent work has also begun to formulate
Lyapunov theory categorically, especially in the setting of flows and related
system-theoretic frameworks~\cite{AMT25,AMM25}.

Our first observation is that the core of the classical argument is not the
numerical value of the Lyapunov function by itself. Rather, the argument uses
the sublevel regions determined by those values. If the Lyapunov value does
not increase along evolution, then these sublevel regions are preserved by
the dynamics. This suggests separating the neighbourhood-theoretic mechanism
of the Lyapunov method from the particular choice of a numerical Lyapunov
function.

We implement this separation in two steps. First, we define
\textit{Lyapunov stability} for an invariant subsystem in
Definition~\ref{def: Lyapunov stable}. This means that every prescribed
neighbourhood of the invariant subsystem contains a smaller neighbourhood
whose future evolutions remain inside the prescribed one. We then introduce
the notion of a \textit{Lyapunov basis} in
Definition~\ref{def: lyapunov basis}. This is a basis of neighbourhoods of an
invariant subsystem consisting of \textit{forward-invariant neighbourhoods}:
each admissible evolution sends the chosen neighbourhood at the source stage
into the corresponding neighbourhood at the target stage. In Lemma~\ref{prop: Lyapunov stability}, we show that the existence of a Lyapunov basis implies Lyapunov
stability. Second, we explain how Lyapunov bases arise from Lyapunov functions. For this
purpose, we introduce \textit{Lyapunov measurement data} in
Definition~\ref{def: lyapunov measurement}. Such data provide the categorical
analogue of the target $\mathbb R_{\geq 0}$ of an ordinary Lyapunov function:
a measurement object $R$, a distinguished zero value, a family of sublevel
neighbourhoods of zero, and an order relation on measurements. A
\textit{categorical Lyapunov function}, defined in
Definition~\ref{def: lyapunov function}, is then a compatible family of
measurements
\begin{equation}
    L_s\colon X(s)\to R
\end{equation}
on the stages of the system. It vanishes on the desired invariant subsystem and is
non-increasing along admissible evolutions. Its sublevel neighbourhoods are the categorical analogue of the classical sublevel sets of a real-valued
Lyapunov function. Theorem~\ref{thm: lyapunov function produces Lyapunov basis}
shows that these sublevel neighbourhoods form a Lyapunov basis. Consequently,
a categorical Lyapunov function implies Lyapunov stability by
Lemma~\ref{prop: Lyapunov stability}.

Finally, we formulate convergence criteria in terms of \textit{Lyapunov
decay}. This means that the Lyapunov measurement of an evolved state
eventually lies in every sublevel neighbourhood of zero. When the sublevel
neighbourhoods of the Lyapunov function form a basis of the chosen
neighbourhood filter, Lyapunov decay implies convergence to the invariant
subsystem. In this way, our categorical formulation recovers the usual
Lyapunov decay criterion for convergence in familiar examples, including
autonomous discrete-time systems and guarded hybrid systems; see
Examples~\ref{ex: Lyapunov discrete autonomous}
and~\ref{ex: lyapunov guarded hybrid}.

\subsubsection*{\textbf{Acknowledgements}} The author acknowledges support by the Deutsche Forschungsgemeinschaft (DFG, German Research Foundation) under Germany's Excellence Strategy - EXC 2121 ``Quantum Universe'' - 390833306 and the Collaborative Research Center - SFB 1624 ``Higher structures, moduli spaces and integrability'' - 506632645.

\section{Categorical dynamical systems}
	
\subsection{Dynamical systems as realisations of abstract evolution shapes}

	The basic idea of a dynamical system is that of a system equipped with
	specified evolutions. We formulate this idea by distinguishing the
	\emph{syntax} of a dynamical system from its \emph{semantics}. The syntax is
	the abstract pattern of admissible evolutions: the stages, modes, or events
	one allows, together with the rules for composing them. The semantics is a
	realisation of this pattern in a chosen category of state spaces and state
	transitions. In this paper, we restrict to closed systems, namely systems with no external inputs.
	
	Let $\mathcal S$ be a small category, regarded as an abstract
	evolution shape. Let $\mathcal C$ be a coefficient category, such as
	$\mathsf{Set}$ or $\mathsf{Vect}$.
	
	\begin{definition}[Categorical dynamical systems]\label{def: dynamical system}
		A $\mathcal C$-valued dynamical system of shape $\mathcal S$ is a functor
		\begin{equation}
			X \colon \mathcal S\longrightarrow \mathcal C \ .
		\end{equation}
		We denote the category of such dynamical systems by
		\begin{equation}
			\operatorname{Dyn}_{\mathcal C}(\mathcal S)
			\coloneqq
			\operatorname{Fun}(\mathcal S,\mathcal C) \ .
		\end{equation}
	\end{definition}
	
	Thus, in our language, a dynamical system is a realisation of an abstract
	pattern of admissible evolutions in a chosen coefficient category. For every
	object $s\in\mathcal S$, the object
	\begin{equation}
		X(s)\in\mathcal C
	\end{equation}
	is the concrete state space assigned to $s$. For every morphism
	\begin{equation}
		\alpha\colon s\to s'
	\end{equation}
	in $\mathcal S$, the morphism
	\begin{equation}
		X(\alpha)\colon X(s)\longrightarrow X(s')
	\end{equation}
	is the concrete state transition assigned to $\alpha$. Functoriality says
	that identity evolutions and composites of evolutions are realized by
	identity maps and composites of transition maps:
	\begin{equation}
		X(\id_s)=\id_{X(s)}
	\end{equation}
	and
	\begin{equation}
		X(\beta\circ\alpha)=X(\beta)\circ X(\alpha)
	\end{equation}
	whenever
	\begin{equation}
		s\xrightarrow{\alpha}s'\xrightarrow{\beta}s''
	\end{equation}
	are composable morphisms in $\mathcal S$.

	A morphism between two dynamical systems
	\begin{equation}
		X,Y\colon \mathcal S\to\mathcal C
	\end{equation}
	is a natural transformation
	\begin{equation}
		\eta\colon X\Longrightarrow Y \ .
	\end{equation}
	Explicitly, it is a family of morphisms
	\begin{equation}
		\eta_s\colon X(s)\longrightarrow Y(s) \ ,
		\qquad s\in\mathcal S \ ,
	\end{equation}
	such that for every abstract evolution $\alpha\colon s\to s'$ in
	$\mathcal S$, the diagram
	\begin{equation}
		\begin{tikzcd}
			X(s) \arrow[r,"X(\alpha)"] \arrow[d,"\eta_s"'] 
			& X(s') \arrow[d,"\eta_{s'}"] \\
			Y(s) \arrow[r,"Y(\alpha)"'] 
			& Y(s')
		\end{tikzcd}
	\end{equation}
	commutes. Hence a morphism of categorical dynamical systems is a comparison
	between two realisations of the same abstract evolution category.

	\begin{definition}[Subsystems]
		Let $X\colon \mathcal S\to\mathcal C$ be a categorical dynamical system. A subsystem of $X$ is a pair $(U, \eta)$ consisting of a functor
		\begin{equation}
			V\colon \mathcal S\to\mathcal C
		\end{equation}
		and a natural transformation
		\begin{equation}
			\eta\colon V\Longrightarrow X
		\end{equation}
		such that, for every object $s\in\mathcal S$, the component
		\begin{equation}
			\eta_s\colon V(s)\longrightarrow X(s)
		\end{equation}
		is a monomorphism in $\mathcal C$.
	\end{definition}

	\begin{definition}[States, subspaces, and generalised states]
		Assume that $\mathcal C$ has a terminal object $1$, and let
		\begin{equation}
			X\colon \mathcal S\to\mathcal C
		\end{equation}
		be a categorical dynamical system.
		
		A state of $X$ over $s\in\mathcal S$ is a morphism
		\begin{equation}
			x\colon 1\longrightarrow X(s) \ .
		\end{equation}
		A subspace of $X$ over $s\in\mathcal S$ is a monomorphism
		\begin{equation}
			u\colon U\hookrightarrow X(s) \ .
		\end{equation}
		More generally, for any object $A\in\mathcal C$, an $A$-generalised state
		of $X$ over $s\in\mathcal S$ is a morphism
		\begin{equation}
			a\colon A\longrightarrow X(s) \ .
		\end{equation}
	\end{definition}

	Thus a state is a $1$-generalised state, while a subspace is a
	generalised state whose structure morphism is monic.

	\subsection{Monoid-type systems}\label{sec: monoid type}
	
	We first consider the case where the abstract system category has one
	object. Such a category is of the form
	\begin{equation}
		BM
	\end{equation}
	for a monoid $M$. A functor
	\begin{equation}
		X\colon BM\to\mathcal C
	\end{equation}
	is equivalently a single object $X_*\in\mathcal C$ equipped with a monoid action
	\begin{equation}
		M\longrightarrow \operatorname{End}_{\mathcal C}(X_*) \ .
	\end{equation}
	Dynamically, this means that all admissible
	evolutions act on one fixed state object.

	\begin{example}[Discrete-time autonomous systems]\label{ex: Discrete-time autonomous systems}
		Let $(\N,+,0)$ be the monoid of natural numbers with addition, and let
		$B\N$ denote the associated one-object category. The category $B\N$ has one
		object $*$, and
		\begin{equation}
			\operatorname{Hom}_{B\N}(*,*)=\N \ .
		\end{equation}
		The morphism $1\in\N$ represents one elementary time step, and $n\in\N$
		represents its $n$-fold iterate.
		
		A functor
		\begin{equation}
			X\colon B\N\to\mathcal C
		\end{equation}
		is characterized by an object
		\begin{equation}
			X_*\coloneqq X(*)
		\end{equation}
		together with an endomorphism
		\begin{equation}
			f\coloneqq X(1)\colon X_*\longrightarrow X_* \ .
		\end{equation}
		Functoriality gives
		\begin{equation}
			X(n)=f^n
		\end{equation}
		for every $n\in\N$. Thus an autonomous discrete-time system is precisely a
		representation of the one-object system category $B\N$.
	\end{example}
	
	\begin{example}[Continuous-time autonomous systems]
		Let $(\mathbb R_{\geq 0},+,0)$ be the monoid of nonnegative real numbers
		with addition, and let $B\mathbb R_{\geq 0}$ denote the associated
		one-object category. A functor
		\begin{equation}
			X\colon B\mathbb R_{\geq 0}\to\mathcal C
		\end{equation}
		is characterized by an object
		\begin{equation}
			X_*\coloneqq X(*)
		\end{equation}
		together with a monoid homomorphism
		\begin{equation}
			\mathbb R_{\geq 0}
			\longrightarrow
			\operatorname{End}_{\mathcal C}(X_*) \ ,
			\qquad
			t\longmapsto X(t) \ .
		\end{equation}
		Equivalently, it is a family of endomorphisms
		\begin{equation}
			\{\varphi_t\colon X_*\to X_*\}_{t\geq 0}
		\end{equation}
		satisfying
		\begin{equation}
			\varphi_0=\id_{X_*}
		\end{equation}
		and
		\begin{equation}
			\varphi_{t+s}=\varphi_t\circ\varphi_s
		\end{equation}
		for every $s,t\geq 0$. This is the categorical form of a continuous-time
		autonomous semiflow.
		
		If one wants two-sided flows, one replaces $\mathbb R_{\geq 0}$ by the
		additive group $\mathbb R$. In that case each $\varphi_t$ is an
		isomorphism, with
		\begin{equation}
			\varphi_t^{-1}=\varphi_{-t} \ .
		\end{equation}
		
		When $\mathcal C$ carries topology or smooth structure, one usually also
		requires the monoid action to have the corresponding regularity.
	\end{example}

	\begin{example}[Systems with multiple admissible updates]\label{ex: multiple admissible updates}
		Let $V$ be a finite set, and let $\overline V$ be the free monoid generated
		by $V$. The one-object category $B\overline V$ has one object $*$, and its
		endomorphisms are words in the alphabet $V$.
		
		A functor
		\begin{equation}
			X\colon B\overline V\to\mathcal C
		\end{equation}
		is characterized by an object
		\begin{equation}
			X_*\coloneqq X(*)
		\end{equation}
		together with a family of endomorphisms
		\begin{equation}
			\{U_v\colon X_*\longrightarrow X_*\}_{v\in V} \ .
		\end{equation}
		We use the convention that a word
		\begin{equation}
			w=v_1v_2\cdots v_k\in\overline V
		\end{equation}
		records the chronological order of updates. Thus $v_1$ is applied first,
		then $v_2$, and so on. Functoriality gives
		\begin{equation}
			X(w)
			=
			U_{v_k}\circ\cdots\circ U_{v_2}\circ U_{v_1} \ .
		\end{equation}
		Thus $X$ specifies a system in which several elementary updates are
		available, and a word in $\overline V$ specifies a finite sequence of
		updates.
		
		If the elementary updates satisfy relations, then one should replace the
		free monoid $\overline V$ by the category generated by the operations and relations.
		
		Many important examples in control theory are of this kind:
		\begin{enumerate}
			\item \textit{Asynchronous update systems.}
			If $V$ is the set of agents, nodes, or local components of a network, then each
			$U_v$ denotes the operation of updating only the component $v$.
			
			\item \textit{Controlled systems with finite input alphabet.}
			If $V$ is a set of control inputs, then each $U_v$ stands for the state transition encode by input $v$.
			
			\item \textit{Deterministic automata.}
			If $V$ is an input alphabet and $X_*$ is a set of automaton states, then
			the maps $U_v$ are the transition maps associated with input letters.
		\end{enumerate}
	\end{example}

	\subsection{Poset-type systems}\label{sec: poset type model}
	
	We next consider the case where the source category is a poset. When the poset is interpreted as a set of time instants, this gives a categorical model for non-autonomous systems.

	\begin{example}[Discrete-time non-autonomous systems]\label{ex: Non-autonomous discrete-time}
		Let $(\N,\leq)$ be the poset category whose objects are natural numbers,
		with a unique morphism
		\begin{equation}
			i\to j
		\end{equation}
		whenever $i\leq j$. A functor
		\begin{equation}
			X\colon (\N,\leq)\to\mathcal C
		\end{equation}
		is a categorical form of a discrete-time non-autonomous system. It consists
		of objects
		\begin{equation}
			X(0),X(1),X(2),\ldots
		\end{equation}
		and transition morphisms
		\begin{equation}
			f_{i,j}\colon X(i)\longrightarrow X(j) \ ,
			\qquad i\leq j \ ,
		\end{equation}
		satisfying
		\begin{equation}
			f_{i,i}=\id_{X(i)}
		\end{equation}
		and
		\begin{equation}
			f_{j,k}\circ f_{i,j}=f_{i,k}
		\end{equation}
		for every $i\leq j\leq k$. Equivalently, the system is determined by its
		one-step transition morphisms
		\begin{equation}
			X(0)\xrightarrow{f_{0,1}}X(1)\xrightarrow{f_{1,2}}X(2)
			\xrightarrow{f_{2,3}}\cdots \ ,
		\end{equation}
		with
		\begin{equation}
			f_{i,j}=f_{j-1,j}\circ\cdots\circ f_{i,i+1}
		\end{equation}
		for every $i<j$.
	\end{example}

	The autonomous case appears as a special realisation of the above time-indexed
	pattern. Namely, given an object $X_*\in\mathcal C$ and an endomorphism
	\begin{equation}
		f\colon X_*\longrightarrow X_* \ ,
	\end{equation}
	one obtains a functor
	\begin{equation}
		X_f\colon(\N,\leq)\to\mathcal C
	\end{equation}
	by setting
	\begin{equation}
		X_f(n)\coloneqq X_*
	\end{equation}
	for every $n\in\N$, and
	\begin{equation}
		X_f(i\leq j)\coloneqq f^{j-i}
	\end{equation}
	for every $i\leq j$. Thus the usual autonomous discrete-time system can be
	viewed as the special non-autonomous system in which all time-indexed state
	objects are identified with the same object $X_*$, and all transition
	morphisms are iterates of a single endomorphism.

	\begin{remark}
		The distinction between autonomous and non-autonomous dynamics is captured
		by the role of time. In an autonomous system, only elapsed time matters: the same time increment always induces the same
		evolution map, independently of when the evolution starts. This is reflected
		categorically by the one-object category $B\N$, whose morphisms are elapsed
		times acting as endomorphisms of a single abstract system. By contrast, in a non-autonomous system the evolution may depend on the
		starting time. This is reflected by the poset category $(\N,\leq)$, whose morphisms
		\begin{equation}
			i\to j
		\end{equation}
		record transitions from a specific starting time $i$ to a specific later
		time $j$. Thus a morphism in $(\N,\leq)$ is not determined only by the
		elapsed time $j-i$, but also by the initial time slice $i$. This is precisely
		the categorical distinction between autonomous and non-autonomous dynamics.
	\end{remark}

	\subsection{Branching, switching, and guarded evolution patterns}\label{sec: switching systems}
	
	The preceding examples have either one fixed system object or a linearly
	ordered family of time-indexed objects. Many dynamical systems have a more
	branching evolution structure. For instance, a system may have several
	modes, several admissible switches, or state-dependent guards controlling
	which transitions are allowed.

	\begin{example}[Switched systems]
		A discrete-time switched system consists of a set of modes $Q$, a state space
		\begin{equation}
			X_q
		\end{equation}
		for each mode $q\in Q$, and transition maps determined by admissible
		switches between modes. Such admissible switches can be encoded by a
		directed graph
		\begin{equation}
			G=(Q,E) \ ,
		\end{equation}
		whose vertices are the modes and whose edges are the allowed switches. An
		edge
		\begin{equation}
			e\colon q\longrightarrow q'
		\end{equation}
		is equipped with a transition map
		\begin{equation}
			F_e\colon X_q\longrightarrow X_{q'} \ .
		\end{equation}
		
		A switching signal is a directed path in $G$,
		\begin{equation}
			q_0\xrightarrow{e_0}q_1\xrightarrow{e_1}q_2
			\xrightarrow{e_2}\cdots \ ,
		\end{equation}
		and a trajectory along this switching signal is a sequence of states
		\begin{equation}
			x_n\in X_{q_n}
		\end{equation}
		satisfying
		\begin{equation}
			x_{n+1}=F_{e_n}(x_n)
		\end{equation}
		for every $n\in\N$.
		
		The passage to the categorical formulation is obtained by replacing the
		graph of admissible switches by its path category. Let
		\begin{equation}
			\mathcal P(G)
		\end{equation}
		be the path category of $G$. Its objects are the modes $q\in Q$, and its
		morphisms are finite directed paths in $G$. The switched system then defines
		a functor
		\begin{equation}
			X\colon \mathcal P(G)\longrightarrow \mathsf{Set}
		\end{equation}
		by setting
		\begin{equation}
			X(q)\coloneqq X_q
		\end{equation}
		on objects and
		\begin{equation}
			X(e)\coloneqq F_e
		\end{equation}
		on generating edges. For a path
		\begin{equation}
			q_0\xrightarrow{e_0}q_1\xrightarrow{e_1}\cdots
			\xrightarrow{e_{k-1}}q_k \ ,
		\end{equation}
		functoriality gives
		\begin{equation}
			X(e_{k-1}\cdots e_1e_0)
			=
			F_{e_{k-1}}\circ\cdots\circ F_{e_1}\circ F_{e_0} \ .
		\end{equation}
		
		Thus the path category $\mathcal P(G)$ records all finite admissible switchings, and the functor $X$ realizes each such switching as the
		corresponding state-transition map.
	\end{example}

	\begin{example}[Hybrid systems]\label{ex: hybrid guarded}
		A hybrid system has both continuous evolution and discrete jumps. It consists
		of a set of modes $Q$. For each mode $q\in Q$, the system evolves
		continuously in time, while certain discrete transitions may move the system
		from one mode to another. The important point is that jumps are usually
		\emph{guarded}: a jump from one mode to another can only happen in certain states. A categorical formulation of hybrid systems has been developed in~\cite{Am06}. Here we explain a different perspective.
		
		Let $\mathcal C$ be a category with pullbacks. We write
		\begin{equation}
			\mathsf{Par}(\mathcal C)
		\end{equation}
		for the category of partial maps in $\mathcal C$. Its objects are the
		objects of $\mathcal C$. A morphism
		\begin{equation}
			X\dashrightarrow Y
		\end{equation}
		in $\mathsf{Par}(\mathcal C)$ is represented by a span
		\begin{equation}
			X \xleftarrow{\;m\;} D \xrightarrow{\;f\;} Y
		\end{equation}
		where $m$ is a monomorphism. Two such spans represent the same partial map
		if they are isomorphic as spans over $X$ and $Y$. In the case
		$\mathcal C=\mathsf{Set}$, this is exactly a map to $Y$ defined partially on a
		subset of $X$.
		
		Let $Q$ be a set of modes, and let
		\begin{equation}
			G=(Q,E)
		\end{equation}
		be a directed graph whose edges describe the admissible jumps between
		modes. We define a hybrid source category
		\begin{equation}
			\mathcal H_G
		\end{equation}
		as follows. Its objects are pairs
		\begin{equation}
			(q,t) \ ,
			\qquad q\in Q \ ,\quad t\in\mathbb R \ .
		\end{equation}
		For every mode $q\in Q$ and every $s\leq t$, there is a continuous-evolution
		morphism
		\begin{equation}
			\gamma^q_{s,t}\colon (q,s)\longrightarrow (q,t) \ ,
		\end{equation}
		satisfying
		\begin{equation}
			\gamma^q_{t,t}=\id_{(q,t)}
		\end{equation}
		and
		\begin{equation}
			\gamma^q_{t,u}\circ \gamma^q_{s,t}
			=
			\gamma^q_{s,u}
		\end{equation}
		for every $s\leq t\leq u$. For every admissible jump
		\begin{equation}
			e\colon q\longrightarrow q'
		\end{equation}
		in $G$, and every time $t\in\mathbb R$, there is a jump morphism
		\begin{equation}
			\rho^e_t\colon (q,t)\longrightarrow (q',t) \ .
		\end{equation}
		The category $\mathcal H_G$ is generated by these morphisms, subject to the
		continuous-evolution relations above.
		
		A guarded hybrid system in $\mathcal C$ is a functor
		\begin{equation}
			X\colon \mathcal H_G\longrightarrow \mathsf{Par}(\mathcal C) \ .
		\end{equation}
		For each mode $q$ and time $t$, it assigns a state object
		\begin{equation}
			X(q,t)\in \mathcal C \ .
		\end{equation}
		A continuous-evolution morphism is realized as a partial map
		\begin{equation}
			X(\gamma^q_{s,t})\colon X(q,s)\dashrightarrow X(q,t) \ .
		\end{equation}
		When the continuous evolution in mode $q$ from time $s$ to time $t$ is
		globally defined, we denote the corresponding transition map by
		\begin{equation}
			\Phi^q_{s,t}\colon X(q,s)\longrightarrow X(q,t) \ .
		\end{equation}
		In this case, the partial map $X(\gamma^q_{s,t})$ is represented by the
		total span
		\begin{equation}
			X(q,s)
			\xleftarrow{\;\id\;}
			X(q,s)
			\xrightarrow{\;\Phi^q_{s,t}\;}
			X(q,t) \ .
		\end{equation}
		A jump morphism
		\begin{equation}
			\rho^e_t\colon (q,t)\longrightarrow (q',t)
		\end{equation}
		is realized as a partial map
		\begin{equation}
			X(\rho^e_t)\colon X(q,t)\dashrightarrow X(q',t) \ ,
		\end{equation}
		represented by a span
		\begin{equation}
			X(q,t)
			\xleftarrow{\;g^e_t\;}
			G^e_t
			\xrightarrow{\;R^e_t\;}
			X(q',t) \ .
		\end{equation}
		Here
		\begin{equation}
			g^e_t\colon G^e_t\hookrightarrow X(q,t)
		\end{equation}
		is a monomorphism, interpreted as the guard of the jump, and
		\begin{equation}
			R^e_t\colon G^e_t\longrightarrow X(q',t)
		\end{equation}
		is the reset map. Thus the jump $e$ is defined precisely on the guard object $G^e_t$.
	\end{example}

	\subsection{Stochastic systems}\label{sec: stochastic models}
	
	The preceding examples are all deterministic: an evolution sends each state to a
	uniquely determined next state. To describe probabilistic evolution, we have to replace the coefficient category by a stochastic one.
	
	A natural choice is the category
	\begin{equation}
		\mathsf{Stoch} \ .
	\end{equation}
	Its objects are measurable spaces
	\begin{equation}
		(X,\Sigma_X) \ ,
	\end{equation}
	where $X$ is a set, and $\Sigma_X$ is $\sigma$-algebra of measurable subsets. A morphism
	\begin{equation}
		K\colon (X,\Sigma_X)\longrightarrow (Y,\Sigma_Y)
	\end{equation}
	is a Markov kernel, i.e. for each
	$x\in X$, we have a probability measure
	\begin{equation}
		K(x,-)
	\end{equation}
	on $(Y,\Sigma_Y)$ such thatfor every measurable subset
	\begin{equation}
		B\in\Sigma_Y \ ,
	\end{equation}
	the function
	\begin{equation}
		X\longrightarrow [0,1] \ ,
		\qquad
		x\longmapsto K(x,B)
	\end{equation}
	is measurable. The number
	\begin{equation}
		K(x,B)
	\end{equation}
	is interpreted as the probability that a system starting at $x$ moves into
	the measurable set $B$.
	
	Composition in $\mathsf{Stoch}$ is given by integration of kernels. The composite of two morphisms
	\begin{equation}
		K\colon (X, \Sigma_X)\longrightarrow (Y,\Sigma_Y) \ , \qquad  L\colon (Y,\Sigma_Y) \longrightarrow (Z,\Sigma_Z) \ ,
	\end{equation}
	is defined by, for every $ C\in\Sigma_Z$,
	\begin{equation}
		(L\circ K)(x,C)
		=
		\int_Y L(y,C)\,K(x,dy) \ .
	\end{equation}
	
	The identity morphism on $(X,\Sigma_X)$ is the Dirac kernel
	\begin{equation}
		\id_X(x,A)=\delta_x(A) \ ,
		\qquad A\in\Sigma_X \ .
	\end{equation}
	Here $\delta_x$ is the Dirac probability measure concentrated at $x$, namely
	\begin{equation}
		\delta_x(A)
		=
		\begin{cases}
			1, & x\in A \ ,\\
			0, & x\notin A \ .
		\end{cases}
	\end{equation}

	Accordingly, a stochastic categorical dynamical system of shape
	$\mathcal S$ is a functor
	\begin{equation}
		X\colon \mathcal S\longrightarrow \mathsf{Stoch} \ .
	\end{equation}
	
	\begin{example}[Markov chains]
		A time-homogeneous Markov chain consists of a measurable state space
		\begin{equation}
			(E,\Sigma_E)
		\end{equation}
		together with a transition kernel
		\begin{equation}
			P\colon E\longrightarrow E \ .
		\end{equation}
		For each state $x\in E$ and each measurable subset
		\begin{equation}
			A\in\Sigma_E \ ,
		\end{equation}
		the number
		\begin{equation}
			P(x,A)
		\end{equation}
		is the probability that the next state lies in $A$, given that the current
		state is $x$.
		
		Categorically, this is precisely a $\mathsf{Stoch}$-valued dynamical system of shape $B\N$
		\begin{equation}
			X\colon B\N\longrightarrow \mathsf{Stoch}
		\end{equation}
		such that
		\begin{equation}
			X(*)= (E, \Sigma_E)
		\end{equation}
		and
		\begin{equation}
			X(1)=P \ .
		\end{equation}
		
		The terminal object of $\mathsf{Stoch}$ is the one-point measurable space, which we denote by $1$. A state of $(E,\Sigma_E)$ is a morphism
		\begin{equation}
			\mu\colon 1\longrightarrow (E,\Sigma_E)
		\end{equation}
		in $\mathsf{Stoch}$, which is equivalently a probability measure on $(E,\Sigma_E)$. Thus the categorical notion of state recovers the usual notion of a stochastic state.
	\end{example}

	\section{Invariant subsystems, equilibria, and orbits}

	\subsection{Invariant subsystems}

	Let
	\begin{equation}
		\Delta \colon \mathcal C \to \operatorname{Fun}(\mathcal S,\mathcal C)
	\end{equation}
	denote the constant functor. More precisely, for every $A\in\mathcal C$,
	every $s\in\mathcal S$, and every morphism $\alpha\colon s\to s'$ in
	$\mathcal S$, we have
	\begin{equation}
		(\Delta A)(s)=A \ ,
		\qquad
		(\Delta A)(\alpha)=\id_A \ .
	\end{equation}
	
	\begin{definition}[Invariant subsystems]\label{def: invariant subsystem}
		Let
		\begin{equation}
			X\colon \mathcal S\to\mathcal C
		\end{equation}
		be a categorical dynamical system. An invariant subsystem of $X$ is a pair
		$(I,\iota)$, where $I\in\mathcal C$ and
		\begin{equation}
			\iota\colon \Delta I\Longrightarrow X
		\end{equation}
		is a natural transformation whose components
		\begin{equation}
			\iota_s\colon I\longrightarrow X(s)
		\end{equation}
		are monomorphisms in $\mathcal C$.
	\end{definition}
	
	Explicitly, an invariant subsystem is a family of monomorphisms
	\begin{equation}
		\iota_s\colon I\hookrightarrow X(s) \ ,
		\qquad s\in\mathcal S \ ,
	\end{equation}
	such that for every morphism $\alpha\colon s\to s'$ in $\mathcal S$, one has
	\begin{equation}
		X(\alpha)\circ \iota_s=\iota_{s'} \ .
	\end{equation}
	Thus the same object $I$ is embedded in each stage $X(s)$, and every admissible transition carries the copy of $I$ in $X(s)$ to the copy of $I$ in $X(s')$.
	
	\begin{definition}[Invariant state]
		Assume that $\mathcal C$ has a terminal object $1$. An invariant state of
		$X$ is a natural transformation
		\begin{equation}
			x\colon \Delta 1\Longrightarrow X \ .
		\end{equation}
	\end{definition}
	
	Since $1$ is terminal, every morphism out of $1$ is a monomorphism.
	Therefore an invariant state is an invariant subsystem whose underlying
	object is $1$.

	\medskip

	In many situations, the systems $X(s)$ are all isomorphic, and one wants to
	compare states lying in different systems $X(s)$ by identifying them with a
	common system. Mathematically, this amounts to choosing an object $M\in\mathcal C$ together with a family of isomorphisms
	\begin{equation}
		\{\theta_s\colon M\xrightarrow{\sim} X(s)\}_{s \in S} \ .
	\end{equation}
	We call such a choice a \textit{state-space identification} of $X$.

	\begin{definition}[Equilibrium subsystems and equilibria]
		Let $X\colon\mathcal S\to\mathcal C$ be a dynamical system equipped with a state-space identification
		\begin{equation}
			\theta_s\colon M\xrightarrow{\sim}X(s) \ ,
			\qquad s\in\mathcal S \ .
		\end{equation}
		An equilibrium subsystem of $(X,M,\theta)$ is a monomorphism
		\begin{equation}
			e\colon E\hookrightarrow M
		\end{equation}
		such that the family
		\begin{equation}
			\iota_s\coloneqq \theta_s\circ e
			\colon E\longrightarrow X(s)
		\end{equation}
		defines an invariant subsystem of $X$.
		
		If $\mathcal C$ has a terminal object $1$, an equilibrium of
		$(X,M,\theta)$ is a morphism
		\begin{equation}
			x\colon 1\longrightarrow M
		\end{equation}
		such that the family
		\begin{equation}
			x_s \coloneqq \theta_s \circ x
		\end{equation}
		defines an invariant state.
	\end{definition}
	
	
	\medskip

	We now justify the terminology of equilibrium and equilibrium subsystem.
	Suppose that
	\begin{equation}
		X(s)=M
	\end{equation}
	for every $s\in\mathcal S$, and choose the identification
	\begin{equation}
		\theta_s=\id_M \ ,
		\qquad s\in\mathcal S \ .
	\end{equation}
	Then every morphism $\alpha\colon s\to s'$ in $\mathcal S$ determines an
	endomorphism
	\begin{equation}
		X(\alpha)\colon M\longrightarrow M \ .
	\end{equation}
	An equilibrium subsystem is precisely a monomorphism
	\begin{equation}
		e\colon E\hookrightarrow M
	\end{equation}
	such that
	\begin{equation}
		X(\alpha)\circ e=e
	\end{equation}
	for every morphism $\alpha$ in $\mathcal S$. Thus every admissible evolution
	acts trivially on the subsystem $E$.
	
	In the case $\mathcal C=\mathsf{Set}$, this means that an equilibrium
	subsystem is a subset
	\begin{equation}
		E\subseteq M
	\end{equation}
	such that
	\begin{equation}
		X(\alpha)(e)=e
	\end{equation}
	for every $e\in E$ and every morphism $\alpha$ in $\mathcal S$. Similarly,
	an equilibrium is an element
	\begin{equation}
		x\in M
	\end{equation}
	such that
	\begin{equation}
		X(\alpha)(x)=x
	\end{equation}
	for every admissible evolution $\alpha$. This recovers the usual fixed-point interpretation of equilibrium.

	\medskip

	Assume now that $\mathcal C$ has limits of shape $\mathcal S$. Then the
	constant functor admits a right adjoint
	\begin{equation}
		\lim_{\mathcal S}\colon
		\operatorname{Fun}(\mathcal S,\mathcal C)\to\mathcal C \ .
	\end{equation}
	Equivalently, for every $A\in\mathcal C$ and every
	$X\in\operatorname{Fun}(\mathcal S,\mathcal C)$, there is a natural
	bijection
	\begin{equation}
		\operatorname{Nat}(\Delta A,X)
		\cong
		\operatorname{Hom}_{\mathcal C}
		\left(A,\lim_{\mathcal S}X\right) \ .
	\end{equation}
	Denote the limiting cone by
	\begin{equation}
		\lambda_s\colon \lim_{\mathcal S}X\longrightarrow X(s) \ ,
		\qquad s\in\mathcal S \ .
	\end{equation}
	The adjunction says precisely that every natural transformation
	\begin{equation}
		a\colon \Delta A\Longrightarrow X
	\end{equation}
	factors uniquely through this limiting cone.
	
	In particular, every invariant subsystem
	\begin{equation}
		\iota\colon \Delta I\Longrightarrow X
	\end{equation}
	determines a unique morphism
	\begin{equation}
		u\colon I\longrightarrow \lim_{\mathcal S}X
	\end{equation}
	such that
	\begin{equation}
		\iota_s=\lambda_s\circ u
	\end{equation}
	for every $s\in\mathcal S$. If $\mathcal S$ has at least one object, then
	$u$ is a monomorphism. Thus every invariant subsystem determines a monomorphism
	\begin{equation}
		u\colon I\hookrightarrow \lim_{\mathcal S}X \ .
	\end{equation}

Conversely, a monomorphism $u\colon I\hookrightarrow \lim_{\mathcal S}X$ determines an invariant subsystem of $X$ precisely when, for every
$s\in\mathcal S$, the composite
\begin{equation}
    \lambda_s\circ u\colon I\longrightarrow X(s)
\end{equation}
is a monomorphism. We call such a monomorphism $I\hookrightarrow \lim_{\mathcal S}X$ \textit{admissible}. Hence, assuming that $\mathcal S$ is nonempty, invariant subsystems of $X$ are equivalent to admissible monomorphisms into $\lim_{\mathcal S}X$. As a special case, invariant states of $X$ are equivalent to morphisms
\begin{equation}
    1\longrightarrow \lim_{\mathcal S}X \ .
\end{equation}

	\medskip

	If $M$ is a monoid and $\mathcal S=BM$ is the associated one-object
	category, then there is only one system
	\begin{equation}
		X_*\coloneqq X(*) \ .
	\end{equation}
	In this case, invariant
	subsystems are precisely equilibrium subsystems, and invariant states are
	precisely equilibria.
	
	Moreover, the limit $\lim_{BM}X$ is automatically an equilibrium subsystem
	whenever it exists. Indeed, for a functor
	\begin{equation}
		X\colon BM\to\mathcal C \ ,
	\end{equation}
	write
	\begin{equation}
		\varphi_m\coloneqq X(m)\colon X_*\to X_* \ .
	\end{equation}
	The limiting cone has only one component,
	\begin{equation}
		\lambda\colon \lim_{BM}X\longrightarrow X_* \ ,
	\end{equation}
	satisfying
	\begin{equation}
		\varphi_m\circ \lambda=\lambda
	\end{equation}
	for every $m\in M$. This morphism is monic. Indeed, if
	$r,r'\colon A\to \lim_{BM}X$ satisfy
	\begin{equation}
		\lambda\circ r=\lambda\circ r' \ ,
	\end{equation}
	then the corresponding cones from $A$ to $X$ are equal, hence
	\begin{equation}
		r=r'
	\end{equation}
	by the universal property of the limit. Thus
	\begin{equation}
		\lambda\colon \lim_{BM}X\hookrightarrow X_*
	\end{equation}
	is an equilibrium subsystem. We call it the universal equilibrium subsystem
	and write
	\begin{equation}
		\operatorname{Eq}(X)\coloneqq \lim_{BM}X \ .
	\end{equation}
	Every subobject of $\operatorname{Eq}(X)$ is again an equilibrium subsystem
	of $X$.

	\begin{example}[Invariant subsystems and equilibria for non-autonomous systems]
		Let
		\begin{equation}
			X\colon (\N,\leq)\longrightarrow \mathcal C
		\end{equation}
		be a non-autonomous discrete-time system as in
		Example~\ref{ex: Non-autonomous discrete-time}. Write
		\begin{equation}
			f_{i,j}\colon X(i)\longrightarrow X(j)
		\end{equation}
		for the transition morphism associated with $i\leq j$. Since $0$ is the initial object of $(\mathbb N,\leq)$, an invariant subsystem of $X$ is equivalently a monomorphism \begin{equation} u\colon I\hookrightarrow X(0) \end{equation} such that, for every $n\in\mathbb N$, the composite \begin{equation} f_{0,n}\circ u\colon I\longrightarrow X(n) \end{equation} is a monomorphism. Thus an invariant subsystem is an embedded part of the initial state object whose images remain embedded at all later times.

		Suppose now that the system is equipped with a state-space identification
		\begin{equation}
			\theta_n\colon M\xrightarrow{\sim} X(n) \ ,
			\qquad n\in\N \ .
		\end{equation}
		Then an equilibrium subsystem is a monomorphism
		\begin{equation}
			e\colon E\hookrightarrow M
		\end{equation}
		which is fixed by all transported transition maps
		\begin{equation}
			F_{i,j}
			\coloneqq
			\theta_j^{-1}\circ f_{i,j}\circ \theta_i
			\colon M\longrightarrow M \ .
		\end{equation}
		Equivalently, for every $i\leq j$, one has
		\begin{equation}
			F_{i,j}\circ e=e \ .
		\end{equation}
		
		Now let $\mathcal C=\mathsf{Set}$ and consider the special case where all state spaces are equal to a fixed set $X_*$, and the transition maps are the iterates of a fixed map
		\begin{equation}
			f\colon X_*\longrightarrow X_* \ .
		\end{equation}
		With the canonical identification
		\begin{equation}
			M=X_* \ ,
			\qquad
			\theta_n=\id_{X_*} \ ,
		\end{equation}
		an invariant subsystem is a subset
		\begin{equation}
			I\subseteq X_*
		\end{equation}
		such that
		\begin{equation}
			f^n|_I\colon I\longrightarrow X_*
		\end{equation}
		is injective for every $n\in\N$. Equivalently, distinct elements of $I$
		have forward trajectories which never merge. In particular, every element
		\begin{equation}
			x\in X_*
		\end{equation}
		defines an invariant state, namely the trajectory
		\begin{equation}
			x,\ f(x),\ f^2(x),\ldots \ .
		\end{equation}
		By contrast, an equilibrium is an element $x\in X_*$
		satisfying
		\begin{equation}
			f(x)=x \ .
		\end{equation}
		Thus invariant states are trajectories, whereas equilibria are constant trajectories.
	\end{example}

	\subsection{Orbits}
	
	Dually, if $\mathcal C$ has colimits of shape $\mathcal S$, then the
	constant functor admits a left adjoint
	\begin{equation}
		\operatorname{colim}_{\mathcal S}\colon
		\operatorname{Fun}(\mathcal S,\mathcal C)\to\mathcal C \ .
	\end{equation}
	It is characterized by the natural bijection
	\begin{equation}
		\operatorname{Hom}_{\mathcal C}
		\left(\operatorname{colim}_{\mathcal S}X,A\right)
		\cong
		\operatorname{Nat}(X,\Delta A) \ .
	\end{equation}

	\begin{definition}[Object of orbits]
		Assume that $\mathcal C$ has colimits of shape $\mathcal S$. The object of
		orbits of a categorical dynamical system
		\begin{equation}
			X\colon \mathcal S\to\mathcal C
		\end{equation}
		is defined by
		\begin{equation}
			\operatorname{Orb}(X)\coloneqq \operatorname{colim}_{\mathcal S}X \ .
		\end{equation}
		It comes equipped with a universal cocone
		\begin{equation}
			q_s\colon X(s)\longrightarrow \operatorname{Orb}(X) \ ,
			\qquad s\in\mathcal S \ ,
		\end{equation}
		satisfying
		\begin{equation}
			q_{s'}\circ X(\alpha)=q_s
		\end{equation}
		for every morphism $\alpha\colon s\to s'$ in $\mathcal S$.
	\end{definition}

	\begin{definition}[Orbit classes]
		Let
		\begin{equation}
			a\colon A\longrightarrow X(s)
		\end{equation}
		be an $A$-generalised state of $X$ over $s\in\mathcal S$. Its orbit class is
		the composite
		\begin{equation}
			q_s\circ a\colon A\longrightarrow \operatorname{Orb}(X) \ .
		\end{equation}
		In particular, the orbit class of a subspace is defined by the same formula.
		
		If $\mathcal C$ has a terminal object $1$ and
		\begin{equation}
			x\colon 1\longrightarrow X(s)
		\end{equation}
		is a state of $X$ over $s$, then the orbit of $x$ is the composite
		\begin{equation}
			1\xrightarrow{x}X(s)
			\xrightarrow{q_s}
			\operatorname{Orb}(X) \ .
		\end{equation}
	\end{definition}

	Let $M$ be a monoid, let $\mathcal S=BM$, and let
	\begin{equation}
		X\colon BM\to\mathsf{Set}
	\end{equation}
	be given by
	\begin{equation}
		X_*\coloneqq X(*) \ ,
		\qquad
		\varphi_m\coloneqq X(m)\colon X_*\to X_* \ .
	\end{equation}
	Then the set of orbits is the quotient
	\begin{equation}
		\operatorname{Orb}(X)
		=
		X_* /{\sim} \ ,
	\end{equation}
	where $\sim$ is the equivalence relation generated by
	\begin{equation}
		x\sim \varphi_m(x)
	\end{equation}
	for all $x\in X_*$ and $m\in M$. The universal cocone is the quotient map
	\begin{equation}
		q_*\colon X_*\longrightarrow X_* /{\sim} \ ,
		\qquad
		x\longmapsto [x] \ .
	\end{equation}
	This recovers the usual notion of orbits.

	\begin{example}[Orbits for non-autonomous systems]
		Let
		\begin{equation}
			X\colon (\N,\leq)\longrightarrow \mathsf{Set}
		\end{equation}
		be a non-autonomous discrete-time system as in
		Example~\ref{ex: Non-autonomous discrete-time} with $\mathcal C = \mathsf{Set}$. In this case, the set of orbits is
		\begin{equation}
			\operatorname{Orb}(X)
			= \left(\coprod_{n\in\N}X(n)\right)\big/{\sim} \ .
		\end{equation}
		The coproduct is given by the disjoint union
		\begin{equation}
			\coprod_{n\in\N}X(n)
			=
			\{(n,x)\mid n\in\N,\ x\in X(n)\} \ .
		\end{equation}
		The equivalence relation $\sim$ is generated by
		\begin{equation}
			(i,x)\sim (j,f_{i,j}(x))
		\end{equation}
		for every $i\leq j$ and every $x\in X(i)$. Thus a state is identified with
		all of its future images. The universal cocone is given by the maps
		\begin{equation}
			q_n\colon X(n)\longrightarrow \operatorname{Orb}(X) \ ,
			\qquad
			x\longmapsto [(n,x)] \ .
		\end{equation}
	\end{example}

	\subsection{Convergence}
	
	We now discuss convergence. Informally, convergence expresses the idea that
	an initial state eventually exhibits a prescribed behaviour. To formulate
	this idea in our setting, we follow our general principle of separating
	syntax from semantics. The initial state and the prescribed behaviour belong
	to the concrete realisation, whereas the notion of ``eventually'' should be
	understood as additional asymptotic structure on the abstract evolution
	shape. We therefore begin by introducing a categorical structure on
	$\mathcal S$ which makes the notion of eventuality precise.
	
	\medskip
	
	Let $s\in\mathcal S$ be an object. We denote by
	\begin{equation}
		\operatorname{Out}(s)
		=
		\{\,\alpha\colon s\to t \mid t\in\mathcal S\,\}
	\end{equation}
	the set of all morphisms with source $s$. We regard $\operatorname{Out}(s)$ as the set of all abstract evolutions starting at $s$.
	
	There is a natural preorder on $\operatorname{Out}(s)$. For
	\begin{equation}
		\alpha\colon s\to t
		\qquad
		\text{and}
		\qquad
		\alpha'\colon s\to t' \ ,
	\end{equation}
	we write
	\begin{equation}
		\alpha\preceq \alpha'
	\end{equation}
	if there exists a morphism $ \beta\colon t\to t'$ such that
	\begin{equation}
		\alpha'=\beta\circ\alpha \ .
	\end{equation}
	
	\begin{remark}
		In general, this preorder need not be a partial order because antisymmetry may fail. For example, if $\mathcal S=BG$ is the one-object category associated with a group $G$, then every morphism can be obtained from any other by composition with another
		group element.
	\end{remark}

	The preorder $\preceq$ determines an Alexandrov topology on
	$\operatorname{Out}(s)$ by declaring the open subsets to be the
	upward-closed subsets. Thus a subset
	\begin{equation}
		R\subseteq \operatorname{Out}(s)
	\end{equation}
	is open if, whenever
	\begin{equation}
		\alpha\in R
		\qquad
		\text{and}
		\qquad
		\alpha\preceq\alpha' \ ,
	\end{equation}
	one has
	\begin{equation}
		\alpha'\in R \ .
	\end{equation}
	Equivalently, $R$ is open if it is closed under further evolution: for every
	morphism
	\begin{equation}
		(\alpha\colon s\to t)\in R
	\end{equation}
	and every morphism $\beta\colon t\to u$ in $\mathcal S$, one has
	\begin{equation}
		\beta\circ\alpha\in R \ .
	\end{equation}
	
	In categorical language, the open subsets of $\operatorname{Out}(s)$ are precisely the \textit{cosieves} on $s$. We denote by
	\begin{equation}
		\operatorname{Cos}(s)
	\end{equation}
	the set of cosieves on $s$, so $\operatorname{Cos}(s)$ is the
	topology of open subsets of $\operatorname{Out}(s)$ for the Alexandrov
	topology defined above.
	
	The set $\operatorname{Cos}(s)$ is naturally a poset under inclusion:
	\begin{equation}
		R\leq R'
		\qquad
		\text{if and only if}
		\qquad
		R\subseteq R' \ .
	\end{equation}
	The smallest element is the empty cosieve, and the largest element is
	$\operatorname{Out}(s)$ itself. Moreover, arbitrary unions and arbitrary
	intersections of cosieves are again cosieves. Indeed, if
	$\{R_i\}_{i\in I}$ is a family of cosieves and
	\begin{equation}
		\alpha\in\bigcap_{i\in I}R_i \ ,
	\end{equation}
	then every further extension of $\alpha$ belongs to every $R_i$, hence
	belongs to $\bigcap_{i\in I}R_i$. The proof for unions is similar.

	\begin{definition}[Eventuality filters]
		An eventuality filter on $\mathcal S$ assigns to every object $s\in\mathcal S$ a collection
		\begin{equation}
			\mathcal E_s\subseteq \operatorname{Cos}(s)
		\end{equation}
		of cosieves on $s$, called the eventual cosieves at $s$, such that:
		
		\begin{enumerate}
			\item $\mathcal E_s$ is nonempty;
			
			\item if $R\in\mathcal E_s$ and $R\subseteq R'$ for another cosieve
			$R'$ on $s$, then
			\begin{equation}
				R'\in\mathcal E_s \ ;
			\end{equation}
			
			\item if $R,R'\in\mathcal E_s$, then
			\begin{equation}
				R\cap R'\in\mathcal E_s \ ;
			\end{equation}
			
			\item the empty cosieve is not eventual:
			\begin{equation}
				\varnothing\notin\mathcal E_s \ .
			\end{equation}
		\end{enumerate}
	\end{definition}
	
	Equivalently, for each object $s\in\mathcal S$, the collection $\mathcal E_s$ is a filter of open subsets on $\operatorname{Out}(s)$.

	\begin{definition}[Eventual properties]
		Let $\mathcal S$ be a category equipped with an eventuality filter $\mathcal E$. Let $s\in\mathcal S$, and let $P$ be a property of morphisms
		with source $s$. We say that $P$ holds eventually from $s$ if there exists
		an eventual cosieve
		\begin{equation}
			R\in\mathcal E_s
		\end{equation}
		such that every morphism in $R$ satisfies $P$.
	\end{definition}
	
	Now let
	\begin{equation}
		X\colon \mathcal S\to\mathcal C
	\end{equation}
	be a categorical dynamical system, where $\mathcal S$ is equipped with an
	eventuality filter $\mathcal E$. We first formulate the strongest form of convergence.
	
	\begin{definition}[Eventual absorption into an invariant subsystem]
		Let
		\begin{equation}
			\iota\colon \Delta I\Longrightarrow X
		\end{equation}
		be an invariant subsystem of $X$, and let
		\begin{equation}
			a\colon A\to X(s)
		\end{equation}
		be an $A$-generalised state over $s$. We say that $a$ is eventually absorbed by $I$ if the following property holds eventually: for a morphism with source $s$
		\begin{equation}
			\alpha\colon s\to t \ ,
		\end{equation}
		the transported generalised state
		\begin{equation}
			X(\alpha)\circ a\colon A\to X(t)
		\end{equation}
		factors through
		\begin{equation}
			\iota_t\colon I\hookrightarrow X(t) \ .
		\end{equation}
	\end{definition}
	
	\begin{remark}
		Since $\iota_t$ is monic, such a factorisation is unique whenever it exists.
	\end{remark}
	
	Equivalently, $a$ is eventually absorbed by $I$ if there exists an eventual
	cosieve
	\begin{equation}
		R\in\mathcal E_s
	\end{equation}
	such that, for every morphism $\alpha\colon s\to t$ in $R$, there exists a morphism
	\begin{equation}
		\widetilde a_\alpha\colon A\to I
	\end{equation}
	satisfying
	\begin{equation}
		\iota_t\circ \widetilde a_\alpha
		=
		X(\alpha)\circ a \ .
	\end{equation}
	
	In the case $A=1$, this says that an ordinary state eventually lands in the
	invariant subsystem $I$. Since $I$ is invariant, once an evolved state lies
	in $I$, all further evolutions remain in $I$.
	
	If $X$ is equipped with a state-space identification
	\begin{equation}
		\{\theta_s\colon M\xrightarrow{\sim}X(s)\}_{s\in\mathcal S}
	\end{equation}
	and
	\begin{equation}
		e\colon E\hookrightarrow M
	\end{equation}
	is an equilibrium subsystem, then a generalised state is said to be
	eventually absorbed by $E$ if it is eventually absorbed by the associated
	invariant subsystem
	\begin{equation}
		\theta_s\circ e\colon E\hookrightarrow X(s) \ ,
		\qquad s\in\mathcal S \ .
	\end{equation}
	
	In particular, if
	\begin{equation}
		\mathrm{eq}\colon 1\to M
	\end{equation}
	is an equilibrium, then a state
	\begin{equation}
		x\colon 1\to X(s)
	\end{equation}
	is eventually absorbed by $\mathrm{eq}$ precisely when the property
	\begin{equation}
		X(\alpha)\circ x
		=
		\theta_t\circ \mathrm{eq}
	\end{equation}
	holds eventually for morphisms $\alpha\colon s\to t$ with source $s$.

	\begin{example}[Eventual absorption in autonomous systems]
		Let $\mathcal S=B\N$, and let
		\begin{equation}
			X\colon B\N\to\mathsf{Set}
		\end{equation}
		be an autonomous system determined by a map
		\begin{equation}
			f\colon X_*\to X_* \ .
		\end{equation}
		
		The nonempty cosieves on $*$ are precisely the subsets
		\begin{equation}
			R_N
			=
			\{\,n\in\N\mid n\geq N\,\}
			\subseteq \operatorname{Hom}_{B\N}(*,*)
		\end{equation}
		for some $N\in\N$. We choose the eventual filter on $B\N$ to be
		\begin{equation}
			\mathcal E_*
			=
			\{\,R_N\mid N\in\N\,\} \ .
		\end{equation}
		
		Let
		\begin{equation}
			I\subseteq X_*
		\end{equation}
		be an invariant subsystem and let
		\begin{equation}
			x_0\colon 1\to X_*
		\end{equation}
		be an initial state, regarded equivalently as an element $ x_0\in X_*$. Then $x_0$ is eventually absorbed by $I$ precisely when there exists $ N\in\N$ such that
		\begin{equation}
			f^n(x_0)\in I \ , \qquad  \forall n \geq N \ .
		\end{equation}
		Thus eventual absorption recovers the usual notion of reaching the fixed subsystem $I$ in finite time.
	\end{example}

	Eventual absorption is often too strong for what we usually mean by
	convergence. For example, a trajectory converging to a fixed point does not
	usually reach that point after finite evolution; rather, it eventually lies
	in every neighbourhood of it. Thus, besides the eventuality filter on the
	source category, we also need a filter on the semantic side.
	
	\begin{definition}[Stagewise neighbourhoods]
		Let
		\begin{equation}
			\iota\colon \Delta I\Longrightarrow X
		\end{equation}
		be an invariant subsystem of
		\begin{equation}
			X\colon \mathcal S\to\mathcal C \ .
		\end{equation}
		A stagewise neighbourhood of $I$ in $X$ is a family of monomorphisms
		\begin{equation}
			\mu_s\colon U_s\hookrightarrow X(s) \ ,
			\qquad s\in\mathcal S \ ,
		\end{equation}
		such that, for every object $s\in\mathcal S$, the component
		\begin{equation}
			\iota_s\colon I\hookrightarrow X(s)
		\end{equation}
		factors through $\mu_s$.
	\end{definition}
	
	\begin{remark}
		Since $\mu_s$ is monic, such a factorisation is unique whenever it exists. Moreover, the induced morphism
		\begin{equation}
			I\longrightarrow U_s
		\end{equation}
		is itself monic, because its composite with $\mu_s$ is the monomorphism $\iota_s$.
	\end{remark}
	
	We order stagewise neighbourhoods by componentwise inclusion. For two stagewise neighbourhoods
	\begin{equation}
		U=\big(\mu_s\colon U_s\hookrightarrow X(s)\big)_{s\in\mathcal S}
	\end{equation}
	and
	\begin{equation}
		V=\big(\nu_s\colon V_s\hookrightarrow X(s)\big)_{s\in\mathcal S} \ ,
	\end{equation}
	we write
	\begin{equation}\label{eq: preorder on Nbd}
		U\leq V
	\end{equation}
	if, for every object $s\in\mathcal S$, there exists a morphism
	\begin{equation}
		\phi_s\colon U_s\longrightarrow V_s
	\end{equation}
	making the triangle
	\begin{equation}
		\begin{tikzcd}
			U_s \arrow[rr,"\phi_s"] \arrow[dr,"\mu_s"'] && V_t \arrow[dl,"\nu_s"] \\
			& X(s) &
		\end{tikzcd}
	\end{equation}
	commute. Such a morphism $\phi_s$ is unique whenever it exists
	and is monic. This defines a preorder on the collection of stagewise
	neighbourhoods of $I$ in $X$, which we denote by
	\begin{equation}
		\big(\mathsf{Nbd}_X(I), \leq \big) \ .
	\end{equation}

	\begin{definition}[Neighbourhood filters]
		A neighbourhood filter of the invariant subsystem $I$ in $X$ is a collection
		\begin{equation}
			\mathcal N_X(I)\subseteq \mathsf{Nbd}_X(I)
		\end{equation}
		of stagewise neighbourhoods of $I$ satisfying the following conditions:
		
		\begin{enumerate}
			\item $\mathcal N_X(I)$ is nonempty;
			
			\item if $U\in\mathcal N_X(I)$ and $U\leq V$, then
			\begin{equation}
				V\in\mathcal N_X(I) \ ;
			\end{equation}
			
			\item if $U,V\in\mathcal N_X(I)$, then there exists
			$W\in\mathcal N_X(I)$ such that
			\begin{equation}
				W\leq U
				\qquad
				\text{and}
				\qquad
				W\leq V \ .
			\end{equation}
		\end{enumerate}
	\end{definition}

	If $\mathcal C$ has pullbacks, then the third condition above may be expressed
	using pullbacks as follows: for any $ U,V\in\mathcal N_X(I)$, there exists $W\in\mathcal N_X(I)$ such that, for every $t\in\mathcal S$, the monomorphism
	\begin{equation}
		W_t\hookrightarrow X(t)
	\end{equation}
	factors through the pullback square
	\begin{equation}
		\begin{tikzcd}
			U_t\times_{X(t)}V_t
			\arrow[r]
			\arrow[d]
			& V_t \arrow[d,hook,"v_t"] \\
			U_t \arrow[r,hook,"\mu_t"']
			& X(t)
		\end{tikzcd}
	\end{equation}

	\begin{definition}[Convergence to an invariant subsystem]\label{def: convergence}
		Let $X\colon\mathcal S\to\mathcal C$ be a categorical dynamical system, where $\mathcal S$ is equipped with an
		eventuality filter
		\begin{equation}
			\mathcal E=\{\mathcal E_s\}_{s\in\mathcal S} \ .
		\end{equation}
		Let $\iota\colon \Delta I\Longrightarrow X$ be an invariant subsystem equipped with a neighbourhood filter
		\begin{equation}
			\mathcal N_X(I) \ ,
		\end{equation}
		and let
		\begin{equation}
			a\colon A\to X(s)
		\end{equation}
		be an $A$-generalised state over $s$.
		
		We say that $a$ converges to $I$ if, for every neighbourhood
		\begin{equation}
			U=\bigl(\mu_s\colon U_s\hookrightarrow X(s)\bigr)_{s\in\mathcal S}
			\in \mathcal N_X(I) \ ,
		\end{equation}
		the following property holds eventually: for a
		morphism with source $s$
		\begin{equation}
			\alpha\colon s\to t \ ,
		\end{equation}
		the evolution of the generalised state
		\begin{equation}
			X(\alpha)\circ a\colon A\to X(t)
		\end{equation}
		factors through
		\begin{equation}
			\mu_t\colon U_t\hookrightarrow X(t) \ .
		\end{equation}
	\end{definition}

	More explicitly, for every
	\begin{equation}
		U\in\mathcal N_X(I) \ ,
	\end{equation}
	there exists an eventual cosieve
	\begin{equation}
		R_U\in\mathcal E_s
	\end{equation}
	such that, for every morphism $\alpha\colon s\to t$ in $R_U$, there exists a morphism
	\begin{equation}
		\widetilde a_\alpha\colon A\to U_t
	\end{equation}
	satisfying
	\begin{equation}
		\mu_t\circ \widetilde a_\alpha
		=
		X(\alpha)\circ a \ .
	\end{equation}
	In the case $A=1$, this says that an ordinary state converges to $I$ if it eventually lies in every preferred neighbourhood of $I$.
	
	Suppose now that $X$ is equipped with a state-space identification
	\begin{equation}
		\{\theta_s\colon M\xrightarrow{\sim}X(s)\}_{s\in\mathcal S}
	\end{equation}
	and that
	\begin{equation}
		e\colon E\hookrightarrow M
	\end{equation}
	is an equilibrium subsystem. Then $e$ determines the invariant subsystem
	\begin{equation}
		\theta_s\circ e\colon E\hookrightarrow X(s) \ ,
		\qquad s\in\mathcal S \ .
	\end{equation}
	After choosing a neighbourhood filter for this associated invariant
	subsystem, we say that an $A$-generalised state converges to $E$ if it converges to the associated invariant subsystem in the sense above.

	\begin{example}[Convergence in non-autonomous systems]\label{ex: convergence discrete non-auto}
		Let
		\begin{equation}
			X\colon(\N,\leq)\to\mathsf{Top}
		\end{equation}
		be a non-autonomous discrete-time system. We write
		\begin{equation}
			f_{i,j}\coloneqq X(i\leq j)\colon X(i)\to X(j)
		\end{equation}
		for $i\leq j$.
		
		For each $i\in\N$, the set $\operatorname{Out}(i)$ consists of the
		morphisms
		\begin{equation}
			i\to j \ ,
			\qquad j\geq i \ .
		\end{equation}
		For $N\geq i$, define
		\begin{equation}
			R_N^i
			=
			\{\,i\to j\mid j\geq N\,\}
			\subseteq \operatorname{Out}(i) \ .
		\end{equation}
		We take the eventuality filter on $(\N, \leq)$ to be
		\begin{equation}
			\mathcal E_i
			=
			\{\,R_N^i\mid N\geq i\,\} \ .
		\end{equation}
		
		Suppose now that $X$ is equipped with a state-space identification
		\begin{equation}
			\theta_j\colon M\xrightarrow{\sim}X(j) \ ,
			\qquad j\in\N \ .
		\end{equation}
		Let
		\begin{equation}
			x_\infty\in M
		\end{equation}
		be an equilibrium. Equivalently, the points
		\begin{equation}
			\xi_j\coloneqq \theta_j(x_\infty)\in X(j) \ ,
			\qquad j\in\N \ ,
		\end{equation}
		define an invariant state
		\begin{equation}
			\xi\colon \Delta 1\Longrightarrow X \ .
		\end{equation}
		
		We define a neighbourhood filter
		\begin{equation}
			\mathcal N_X(x_\infty)
			=
			\left\{
			U=(U_j\subseteq X(j))_{j\in\N}
			\ \middle|\
			\begin{array}{l}
				\text{there exists an open neighbourhood }O\subseteq M
				\text{ of }x_\infty\\
				\text{such that }\theta_j(O)\subseteq U_j
				\text{ for every }j\in\N
			\end{array}
			\right\} \ .
		\end{equation}
		It is straightforward to check that this is indeed a filter using the fact that $\theta_j$ are homeomorphisms.
		
		Let
		\begin{equation}
			x_i\in X(i)
		\end{equation}
		be an initial state at time $i$. Then $x_i$ converges to the equilibrium
		$x_\infty$ precisely when, for every open neighbourhood
		\begin{equation}
			O\subseteq M
		\end{equation}
		of $x_\infty$, there exists $N\geq i$ such that
		\begin{equation}
			\theta_j^{-1} \big(f_{i,j}(x_i) \big) \in O
		\end{equation}
		for every $j \geq N$. This is equivalent to
		\begin{equation}
			\theta_j^{-1}\bigl(f_{i,j}(x_i)\bigr)
			\longrightarrow
			x_\infty
		\end{equation}
		as $j\to\infty$. Hence we recover the usual
		topological convergence condition.
	\end{example}

	\section{A categorical Lyapunov principle}
	
	The classical Lyapunov method is a way of proving stability and convergence
	without explicitly solving the equations of motion. Its basic idea is to
	assign to a system an energy-like quantity which measures deviation from the
	desired states. If this quantity does not increase along evolution,
	then every sublevel set of this quantity is preserved by the dynamics.

	Note that the numerical Lyapunov function
	is not the essential structure by itself. What matters for the stability
	argument is the family of sublevel neighbourhoods that it produces, together with the fact that these neighbourhoods are respected by the admissible evolutions. This point of view is especially well suited to our categorical formulation.

    \subsection{Lyapunov stability}
	
	\begin{definition}[Lyapunov stability]\label{def: Lyapunov stable}
		Let $X\colon\mathcal S\to\mathcal C$ be a dynamical system and let
		\begin{equation}
			\iota\colon \Delta I\Longrightarrow X
		\end{equation}
		be an invariant subsystem equipped with a neighbourhood filter
		$\mathcal N_X(I)$. We say that $I$ is Lyapunov stable in $X$ if the following condition holds.
		For every neighbourhood
		\begin{equation}
			U =
			\bigl(\mu_s\colon U_s\hookrightarrow X(s)\bigr)_{s\in\mathcal S}
			\in \mathcal N_X(I) \ ,
		\end{equation}
		there exists a neighbourhood
		\begin{equation}
			V =
			\bigl(\nu_s\colon V_s\hookrightarrow X(s)\bigr)_{s\in\mathcal S}
			\in \mathcal N_X(I)
		\end{equation}
		such that
		\begin{equation}
			V\leq U
		\end{equation}
		in the sense of~\eqref{eq: preorder on Nbd}, and such that, for every
		morphism $\alpha\colon s\to t$ in $\mathcal S$, the composite
		\begin{equation}
			V_s\xrightarrow{\nu_s}X(s)\xrightarrow{X(\alpha)}X(t)
		\end{equation}
		factors through
		\begin{equation}
			\mu_t\colon U_t\hookrightarrow X(t) \ .
		\end{equation}
	\end{definition}

	\begin{definition}[Forward-invariant neighbourhoods]\label{def: forward invariant neighbourhoods}
		Let $\iota\colon \Delta I\Rightarrow X$ be an invariant subsystem. A stagewise neighbourhood
		\begin{equation}
			U=\big(\mu_s\colon U_s\hookrightarrow X(s)\big)_{s\in\mathcal S}
		\end{equation}
		of $I$ is called forward-invariant if, for every morphism $\alpha\colon s\to t$ in $\mathcal S$, the composite
		\begin{equation}
			U_s\xrightarrow{\mu_s}X(s)\xrightarrow{X(\alpha)}X(t)
		\end{equation}
		factors through
		\begin{equation}
			\mu_t\colon U_t\hookrightarrow X(t) \ .
		\end{equation}
		Equivalently, there exists a morphism
		\begin{equation}
			U(\alpha)\colon U_s\longrightarrow U_t
		\end{equation}
		such that
		\begin{equation}
			\mu_t\circ U(\alpha)=X(\alpha)\circ \mu_s \ .
		\end{equation}
	\end{definition}
	
	Since $\mu_t$ is monic, the morphism $U(\alpha)$ is unique whenever it
	exists. It is straightforward to verify that we therefore obtain a functor
	\begin{equation}
		U\colon \mathcal S\to\mathcal C \ , \qquad s \mapsto U_s\ , \qquad \alpha \mapsto U(\alpha) \ .
	\end{equation}
	Thus a forward-invariant neighbourhood is precisely a subsystem of $X$ which is also a stagewise neighbourhood of $I$.
	
	\begin{definition}[Lyapunov bases]\label{def: lyapunov basis}
		Let $\iota\colon \Delta I\Longrightarrow X$ be an invariant subsystem equipped with a neighbourhood filter $\mathcal N_X(I)$.
		
		A Lyapunov basis for $\mathcal N_X(I)$ is a subcollection
		\begin{equation}
			\mathcal B\subseteq \mathcal N_X(I)
		\end{equation}
		such that:
		
		\begin{enumerate}
			\item every $V\in \mathcal B$ is forward-invariant;
			
			\item $\mathcal B$ is a basis for the filter $\mathcal N_X(I)$, in the sense that for every $U \in \mathcal N_X(I)$,
			there exists $V \in \mathcal B$
			such that
			\begin{equation}
				V \leq U \ .
			\end{equation}
		\end{enumerate}
	\end{definition}

	\begin{lemma}\label{prop: Lyapunov stability}
		Let
		\begin{equation}
			X\colon\mathcal S\to\mathcal C
		\end{equation}
		be a categorical dynamical system, and let
		\begin{equation}
			\iota\colon \Delta I\Longrightarrow X
		\end{equation}
		be an invariant subsystem equipped with a neighbourhood filter
		$\mathcal N_X(I)$. If $I$ admits a Lyapunov neighbourhood basis, then $I$ is
		Lyapunov stable in $X$.
	\end{lemma}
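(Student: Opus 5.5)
The plan is to take $V$ to be a basis element below $U$ and to check the two conditions of Definition~\ref{def: Lyapunov stable} directly, using forward-invariance of $V$ followed by the inclusion $V\leq U$.

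Let $\mathcal B\subseteq\mathcal N_X(I)$ be a Lyapunov basis, and fix an arbitrary neighbourhood $U=(\mu_s\colon U_s\hookrightarrow X(s))_{s\in\mathcal S}\in\mathcal N_X(I)$. By the basis condition (2) of Definition~\ref{def: lyapunov basis}, there is some $V=(\nu_s\colon V_s\hookrightarrow X(s))_{s\in\mathcal S}\in\mathcal B$ with $V\leq U$. Since $\mathcal B\subseteq\mathcal N_X(I)$, this $V$ lies in $\mathcal N_X(I)$, so it is an admissible candidate, and the requirement $V\leq U$ already holds by construction. It remains to verify the factorisation condition for every morphism $\alpha\colon s\to t$.

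For this, first use that $V$ is forward-invariant (Definition~\ref{def: forward invariant neighbourhoods}). This gives $V(\alpha)\colon V_s\to V_t$ with $\nu_t\circ V(\alpha)=X(\alpha)\circ\nu_s$. Next, the relation $V\leq U$ supplies, at the stage $t$, a morphism $\phi_t\colon V_t\to U_t$ with $\mu_t\circ\phi_t=\nu_t$. Then
\begin{equation}
X(\alpha)\circ\nu_s=\nu_t\circ V(\alpha)=\mu_t\circ\bigl(\phi_t\circ V(\alpha)\bigr) \ ,
\end{equation}
so $X(\alpha)\circ\nu_s$ factors through $\mu_t$ via $\phi_t\circ V(\alpha)$, which is exactly what Definition~\ref{def: Lyapunov stable} asks for.

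There is no real obstacle here. The only point that needs attention is that the comparison morphism must be taken at the target stage $t$, not the source $s$: forward-invariance moves the state into $V_t$, and only then is the inclusion $V_t\to U_t$ applied. Uniqueness of the factorisation is not needed, although it follows from $\mu_t$ being monic.
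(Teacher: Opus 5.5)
Your proof is correct and follows the paper's argument essentially verbatim: choose a forward-invariant basis element $V\leq U$, use forward-invariance to factor $X(\alpha)\circ\nu_s$ through $\nu_t$, and then compose with the comparison morphism $V_t\to U_t$ at the target stage. Your explicit formula $\mu_t\circ(\phi_t\circ V(\alpha))$ just writes out the factorisation that the paper states in words.
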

	
	\begin{proof}
		Let
		\begin{equation}
			U \in \mathcal N_X(I)
		\end{equation}
		be an arbitrary neighbourhood of $I$. By our assumption, there exists $V \in \mathcal N_X(I)$ such that $V$ is forward-invariant and $V \leq U$. We claim that $V$ satisfies the stability condition in Definition~\ref{def: Lyapunov stable}.
		
		Indeed, let
		\begin{equation}
			\alpha\colon s\to t
		\end{equation}
		be a morphism in $\mathcal S$. Since $V$ is forward-invariant, the
		composite
		\begin{equation}
			V_s\xrightarrow{\nu_s}X(s)\xrightarrow{X(\alpha)}X(t)
		\end{equation}
		factors through
		\begin{equation}
			\nu_t\colon V_t\hookrightarrow X(t) \ .
		\end{equation}
		Since $V\leq U$, the monomorphism $\nu_t$ factors through
		\begin{equation}
			\mu_t\colon U_t\hookrightarrow X(t) \ .
		\end{equation}
		Therefore, $V_s\longrightarrow X(t)$ also factors through $U_t$. This proves Lyapunov stability.
	\end{proof}

\subsection{Categorical Lyapunov functions}

We now introduce categorical Lyapunov functions and explain how thy produce Lyapunov bases. We assume that $\mathcal C$ has terminal object $1$ and pullbacks.

\begin{definition}[Lyapunov measurement data]\label{def: lyapunov measurement}
A Lyapunov
measurement datum in $\mathcal C$ consists of:

\begin{enumerate}
    \item an object
    \begin{equation}
        R\in\mathcal C \ ,
    \end{equation}
    called the measurement object;

    \item a distinguished morphism
    \begin{equation}
        0\colon 1\to R \ ,
    \end{equation}
    called the zero value;

    \item a family of monomorphisms
    \begin{equation}
        r_\lambda\colon R_\lambda\hookrightarrow R \ ,
        \qquad \lambda\in\Lambda
    \end{equation}
    for some index family $\Lambda$, interpreted as sublevel neighbourhoods of $0$;

    \item for every object $Y\in\mathcal C$, a preorder on
    \begin{equation}
        \operatorname{Hom}_{\mathcal C}(Y,R)
    \end{equation}
    which is preserved by precomposition.
\end{enumerate}
These data are required to satisfy the following two conditions: first, the 0-value morphism factors through $r_\lambda$ for every $\lambda \in \Lambda$; second, for any
\begin{equation}
    h\leq k\colon Y\to R \ ,
\end{equation}
if $k$ factors through $r_\lambda$, then $h$ also factors through $r_\lambda$.
\end{definition}

\begin{example}
Let $ \mathcal C=\mathsf{Set}$. Take
\begin{equation}
    R=\mathbb R_{\geq 0} \ ,
\end{equation}
and
\begin{equation}
    0\colon *\to \mathbb R_{\geq 0}
\end{equation}
be the point $0$. For every $\varepsilon>0$, take $r_\epsilon$ to be the canonical inclusion
\begin{equation}
    R_\varepsilon=[0,\varepsilon]
    \hookrightarrow
    \mathbb R_{\geq 0} \ .
\end{equation}

For any set $Y$, the preorder on $\operatorname{Hom}_{\mathsf{Set}}(Y,\mathbb R_{\geq 0})$ is the pointwise order: for maps
\begin{equation}
    h,k\colon Y\to \mathbb R_{\geq 0} \ ,
\end{equation}
we write
\begin{equation}
    h\leq k
\end{equation}
if and only if $h(y)\leq k(y)$ for every $y\in Y$. This preorder is preserved by precomposition. Indeed, if $f\colon Z\to Y$ is another map and $h\leq k$, then we have
\begin{equation}
    h\circ f\leq k\circ f \ .
\end{equation}

Moreover, it is immediate to see that every $[0,\varepsilon]$ contains $0$ and is downward closed with respect to the preorder on $\operatorname{Hom}_{\mathsf{Set}}(Y,\mathbb R_{\geq 0})$.
\end{example}

\begin{definition}[Categorical Lyapunov functions]\label{def: lyapunov function}
Let $ X\colon \mathcal S\to\mathcal C$ be a dynamical system, and let $ \iota\colon \Delta I\Rightarrow X$ be an invariant subsystem equipped with a neighbourhood filter $ \mathcal N_X(I)$.

A categorical Lyapunov function for $I$ consists of a Lyapunov measurement
datum
\begin{equation}
    \mathfrak R=(R,0,\{r_\lambda\colon R_\lambda\hookrightarrow R\}_{\lambda\in\Lambda},\leq)
\end{equation}
in $\mathcal C$, together with a family of morphisms
\begin{equation}
    L_s\colon X(s)\to R \ ,
    \qquad s\in\mathcal S \ ,
\end{equation}
satisfying the following conditions:

\begin{enumerate}
    \item $L$ vanishes on the invariant subsystem $I$: for every
    $s\in\mathcal S$,
    \begin{equation}
        L_s\circ \iota_s
        =
        0\circ {!}_I \ ,
    \end{equation}
    where ${!}_I$ is the unique morphism $I \to 1$.

    \item $L$ is non-increasing along admissible evolutions: for every
    morphism $\alpha\colon s\to t$ in $\mathcal S$,
    \begin{equation}
        L_t\circ X(\alpha)\leq L_s
    \end{equation}
    as morphisms $X(s)\to R$.

    \item The sublevel stagewise neighbourhoods
    \begin{equation}
        U^\lambda_s
        \coloneqq
        X(s)\times_R R_\lambda
    \end{equation}
    form a basis for the neighbourhood filter $\mathcal N_X(I)$.
\end{enumerate}
\end{definition}

\begin{remark}
    We clarify here why the objects $U^\lambda_s$ are indeed stagewise
neighbourhoods of $I$. For each $\lambda\in\Lambda$ and
$s\in\mathcal S$, the object $U^\lambda_s$ is defined as the pullback of the
monomorphism $r_\lambda\colon R_\lambda\hookrightarrow R$ along
$L_s\colon X(s)\to R$. Since monomorphisms are stable under pullback, the
projection
\begin{equation}
    \mu^\lambda_s\colon U^\lambda_s\hookrightarrow X(s)
\end{equation}
is a monomorphism.

Moreover, since $L_s\circ\iota_s=0\circ {!}_I$ and since $0\colon 1\to R$
factors through every $r_\lambda\colon R_\lambda\hookrightarrow R$, the
composite $ L_s\circ\iota_s\colon I\to R$ factors through $R_\lambda$. Hence the pair of morphisms
\begin{equation}
    \iota_s\colon I\to X(s),
    \qquad
    I\to R_\lambda
\end{equation}
satisfies the pullback compatibility condition. By the universal property of
the pullback, there is therefore a unique morphism
\begin{equation}
    I\longrightarrow U^\lambda_s
\end{equation}
whose composite with $\mu_s^\lambda$ is $\iota_s$. Consequently, for every $\lambda \in \Lambda$,
\begin{equation}
    U^\lambda=
    \bigl(u^\lambda_s\colon U^\lambda_s\hookrightarrow X(s)\bigr)_{s\in\mathcal S}
\end{equation}
is a stagewise neighbourhood of $I$.
\end{remark}

We now prove that a categorical Lyapunov function induces a Lyapunov basis.

\begin{theorem}\label{thm: lyapunov function produces Lyapunov basis}
Let
\begin{equation}
    L=\{L_s\colon X(s)\to R\}_{s\in\mathcal S}
\end{equation}
be a categorical Lyapunov function for $I$. Then the associated sublevel
neighbourhoods
\begin{equation}
    U^\lambda=
    \bigl(U^\lambda_s\hookrightarrow X(s)\bigr)_{s\in\mathcal S} \ ,
    \qquad \lambda\in\Lambda \ ,
\end{equation}
form a Lyapunov basis for $\mathcal N_X(I)$. Consequently, $I$ is Lyapunov
stable in $X$.
\end{theorem}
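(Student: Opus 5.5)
To show that the family $\{U^\lambda\}_{\lambda\in\Lambda}$ is a Lyapunov basis, I will check the two conditions of Definition~\ref{def: lyapunov basis} and then invoke Lemma~\ref{prop: Lyapunov stability}.

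\textbf{Basis and membership.} The remark following Definition~\ref{def: lyapunov function} shows that each $U^\lambda$ is a stagewise neighbourhood of $I$. Condition~(3) of Definition~\ref{def: lyapunov function} states that these sublevel neighbourhoods form a basis of $\mathcal N_X(I)$, and I read this as including $U^\lambda\in\mathcal N_X(I)$. Membership also follows independently: each $U^\lambda$ lies above some element of the filter, and filters are upward closed. So the basis condition~(2) is immediate.

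\textbf{Forward invariance.} This is the main step. Fix $\lambda$ and a morphism $\alpha\colon s\to t$. We must factor $X(\alpha)\circ\mu^\lambda_s\colon U^\lambda_s\to X(t)$ through $\mu^\lambda_t$. Since $U^\lambda_t=X(t)\times_R R_\lambda$ is a pullback, it suffices to show that
\[
L_t\circ X(\alpha)\circ\mu^\lambda_s\colon U^\lambda_s\to R
\]
factors through $r_\lambda$.

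Write $p\colon U^\lambda_s\to R_\lambda$ for the pullback projection. Then
\[
L_s\circ\mu^\lambda_s=r_\lambda\circ p,
\]
so $L_s\circ\mu^\lambda_s$ factors through $r_\lambda$. Condition~(2) of Definition~\ref{def: lyapunov function} gives $L_t\circ X(\alpha)\leq L_s$ in $\operatorname{Hom}(X(s),R)$. The preorder is preserved by precomposition, so precomposing with $\mu^\lambda_s$ gives
\[
L_t\circ X(\alpha)\circ\mu^\lambda_s\leq L_s\circ\mu^\lambda_s .
\]
By the downward-closedness axiom of Definition~\ref{def: lyapunov measurement}, the smaller morphism also factors through $r_\lambda$, say via $q\colon U^\lambda_s\to R_\lambda$.

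The universal property of the pullback applied to the pair $\bigl(X(\alpha)\circ\mu^\lambda_s,\;q\bigr)$ then yields $U^\lambda(\alpha)\colon U^\lambda_s\to U^\lambda_t$ with
\[
\mu^\lambda_t\circ U^\lambda(\alpha)=X(\alpha)\circ\mu^\lambda_s .
\]
This is exactly forward invariance in the sense of Definition~\ref{def: forward invariant neighbourhoods}.

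\textbf{Conclusion.} The family $\{U^\lambda\}$ is therefore a Lyapunov basis, and Lemma~\ref{prop: Lyapunov stability} gives Lyapunov stability of $I$. The only delicate point is the chaining of the order axioms: precomposition compatibility must be used first, and downward closure of factorisation through $r_\lambda$ second. Everything else is a direct application of pullback universality.
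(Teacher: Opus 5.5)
Your proof is correct and follows the paper's argument. The basis property comes straight from condition (3) of Definition~\ref{def: lyapunov function}, and forward invariance comes from the pullback universal property once the Lyapunov inequality is combined with downward closure of $R_\lambda$; you usefully make explicit the precomposition with $\mu^\lambda_s$, which the paper leaves implicit. One small caveat: your ``independent'' membership argument is unjustified, since nothing guarantees that $U^\lambda$ lies above an element of $\mathcal N_X(I)$ unless it already belongs to it, but that aside is superfluous because condition (3), read as in Definition~\ref{def: lyapunov basis}, already gives $U^\lambda\in\mathcal N_X(I)$.
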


\begin{proof}
By definition of a categorical Lyapunov function, the sublevel
neighbourhoods $U^\lambda$ form a basis for $\mathcal N_X(I)$. It remains to
show that each $U^\lambda$ is forward-invariant.

Let $\alpha\colon s\to t$ be a morphism in $\mathcal S$. We must show that
the composite
\begin{equation}
    U^\lambda_s\hookrightarrow X(s)\xrightarrow{X(\alpha)}X(t)
\end{equation}
factors through
\begin{equation}
    U^\lambda_t\hookrightarrow X(t) \ .
\end{equation}
By construction, the composite
\begin{equation}
    U^\lambda_s\hookrightarrow X(s)\xrightarrow{L_s}R
\end{equation}
factors through
\begin{equation}
    R_\lambda\hookrightarrow R \ .
\end{equation}
The Lyapunov inequality gives
\begin{equation}
    L_t\circ X(\alpha)\leq L_s \ .
\end{equation}
Since $R_\lambda$ is downward closed, it follows that
\begin{equation}
    U^\lambda_s\hookrightarrow X(s)
    \xrightarrow{X(\alpha)}X(t)
    \xrightarrow{L_t}R
\end{equation}
also factors through $R_\lambda$. By the pullback definition of
$U^\lambda_t$, this induces a unique morphism $\lambda_s^t\colon U_s^\lambda \to U_t^\lambda$ such that
\begin{equation}
    X(\alpha) \circ \mu_s^\lambda = \mu_\lambda^t \circ \lambda_s^t \ .
\end{equation}
Hence $U^\lambda$ is forward-invariant.

Therefore the sublevel neighbourhoods form a Lyapunov basis. The final claim
follows from Lemma~\ref{prop: Lyapunov stability}.
\end{proof}

\subsection{Convergence via Lyapunov decay}

Lyapunov stability says that sufficiently small perturbations remain small
under all future evolutions. To prove actual convergence to an invariant
subsystem, one needs an additional decay condition.

\begin{definition}[Lyapunov decay]
Let
\begin{equation}
    L=\{L_s\colon X(s)\to R\}_{s\in\mathcal S}
\end{equation}
be a Lyapunov function for an invariant subsystem $I$ of a dynamical
system $X$, with measurement datum
\begin{equation}
    \mathfrak R=
    (R,0,\{r_\lambda\colon R_\lambda\hookrightarrow R\}_{\lambda\in\Lambda},\leq) \ .
\end{equation}
Let
\begin{equation}
    a\colon A\to X(s)
\end{equation}
be an $A$-generalised state over $s$. We say that $a$ has Lyapunov decay
toward $I$ if, for every $\lambda\in\Lambda$, the following property holds
eventually for morphisms $\alpha\colon s\to t$ in $\mathcal S$: the composite
\begin{equation}
    A\xrightarrow{a}X(s)\xrightarrow{X(\alpha)}X(t)
    \xrightarrow{L_t}R
\end{equation}
factors through
\begin{equation}
    r_\lambda\colon R_\lambda\hookrightarrow R \ .
\end{equation}
\end{definition}

\begin{theorem}[Lyapunov convergence criterion]\label{thm: Lyapunov criteria}
Let $X\colon \mathcal S\to\mathcal C$ be a dynamical system, where $\mathcal S$ is equipped with an eventuality
filter $ \mathcal E$. Let $\iota\colon \Delta I\Rightarrow X$ be an invariant subsystem equipped with a neighbourhood filter $\mathcal N_X(I)$.
Let $\mathfrak R=
    (R,0,\{r_\lambda\colon R_\lambda\hookrightarrow R\}_{\lambda\in\Lambda},\leq)$ be a Lyapunov measurement datum, and let $L=\{L_s\colon X(s)\to R\}_{s\in\mathcal S}$ be a categorical Lyapunov function for $I$ with values in $\mathfrak R$.
    
    If an $A$-generalised state
\begin{equation}
    a\colon A\to X(s)
\end{equation}
has Lyapunov decay toward $I$, then $a$ converges to $I$.
\end{theorem}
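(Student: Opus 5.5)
The plan is to unwind Definition~\ref{def: convergence} directly, using condition (3) of Definition~\ref{def: lyapunov function}: the sublevel neighbourhoods $U^\lambda$ form a basis of $\mathcal N_X(I)$.

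\textbf{Step 1 (reduce to a sublevel neighbourhood).} Fix an arbitrary $U=(\mu_t\colon U_t\hookrightarrow X(t))_{t}\in\mathcal N_X(I)$. By the basis property, choose $\lambda\in\Lambda$ with $U^\lambda\leq U$. For each $t$ this gives $\phi_t\colon U^\lambda_t\to U_t$ with $\mu_t\circ\phi_t=\mu^\lambda_t$.

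\textbf{Step 2 (use decay at this $\lambda$).} By Lyapunov decay applied to this $\lambda$, there is an eventual cosieve $R\in\mathcal E_s$ such that, for every $\alpha\colon s\to t$ in $R$, the composite $L_t\circ X(\alpha)\circ a$ factors through $r_\lambda$. That is, there exists $b_\alpha\colon A\to R_\lambda$ with
\[
r_\lambda\circ b_\alpha=L_t\circ X(\alpha)\circ a .
\]
Since $U^\lambda_t=X(t)\times_R R_\lambda$ is the pullback of $r_\lambda$ along $L_t$, the pair $(X(\alpha)\circ a,\,b_\alpha)$ is a compatible cone. The universal property therefore gives $c_\alpha\colon A\to U^\lambda_t$ with
\[
\mu^\lambda_t\circ c_\alpha=X(\alpha)\circ a .
\]

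\textbf{Step 3 (transfer to $U$).} Put $\widetilde a_\alpha\coloneqq\phi_t\circ c_\alpha\colon A\to U_t$. Then
\[
\mu_t\circ\widetilde a_\alpha=\mu^\lambda_t\circ c_\alpha=X(\alpha)\circ a
\]
for every $\alpha\in R$. So the required factorisation holds eventually, with witnessing cosieve $R_U\coloneqq R$. Since $U$ was arbitrary, $a$ converges to $I$.

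There is no genuine obstacle here. Neither the non-increasing condition nor forward invariance (Theorem~\ref{thm: lyapunov function produces Lyapunov basis}) is needed, because decay is assumed for every $\lambda$ and the eventual cosieve can be chosen per neighbourhood. The only point needing care is Step~2: one must check that a factorisation of $L_t\circ X(\alpha)\circ a$ through $r_\lambda$ is exactly what the pullback's universal property requires. It is, with the square being $L_t\circ\mu^\lambda_t=r_\lambda\circ\pi$, where $\pi\colon U^\lambda_t\to R_\lambda$ is the other projection.
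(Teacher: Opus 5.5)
Your proof is correct and follows essentially the same route as the paper's: use the basis property to pick $\lambda$ with $U^\lambda\leq U$, get an eventual cosieve from Lyapunov decay, lift $X(\alpha)\circ a$ through the pullback $U^\lambda_t$, and compose with the comparison map $U^\lambda_t\to U_t$. Your remark that neither the non-increasing condition nor forward invariance is needed is accurate, since the paper's proof does not use them either; the only blemish is that you reuse the letter $R$ for the eventual cosieve, which clashes with the measurement object $R$.
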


\begin{proof}
Let
\begin{equation}
    U\in\mathcal N_X(I)
\end{equation}
be an arbitrary neighbourhood. Since the sublevel neighbourhoods of $L$ form
a basis for $\mathcal N_X(I)$, there exists $\lambda\in\Lambda$ such that
\begin{equation}
    U^\lambda\leq U \ .
\end{equation}
By Lyapunov decay, there exists an eventual cosieve
\begin{equation}
    R_U\in\mathcal E_s
\end{equation}
such that, for every morphism $\alpha\colon s\to t$ in $R_U$, the composite
\begin{equation}
    A\xrightarrow{a}X(s)\xrightarrow{X(\alpha)}X(t)
    \xrightarrow{L_t}R
\end{equation}
factors through $R_\lambda$. By the universal property of pullback,
\begin{equation}
    X(\alpha)\circ a\colon A\to X(t)
\end{equation}
factors through
\begin{equation}
    U^\lambda_t\hookrightarrow X(t) \ .
\end{equation}
Since $U^\lambda\leq U$, it follows that $X(\alpha)\circ a$ factors through
$U_t\hookrightarrow X(t)$ for every $\alpha\in R_U$. This is precisely the
definition of convergence of $a$ to $I$.
\end{proof}

\begin{example}[Autonomous discrete-time systems]\label{ex: Lyapunov discrete autonomous}
Consider an autonomous discrete-time system
\begin{equation}
    X\colon B\mathbb N\to\mathsf{Top}
\end{equation}
defined by
\begin{equation}
    X(*)=M \ ,
    \qquad
    X(1)=f \ .
\end{equation}
Let $x_\infty\in M$ be an equilibrium, so that $f(x_\infty)=x_\infty$. It determines an invariant state
\begin{equation}
    \iota\colon \Delta 1\Longrightarrow X
\end{equation}
whose unique component is the point
\begin{equation}
    \iota_*\colon \{*\}\to M,
    \qquad
    \iota_*(*)=x_\infty \ .
\end{equation}

We use the standard Lyapunov measurement datum in $\mathsf{Top}$. Namely,
take $\mathbb R_{\geq 0}$ with its usual topology,
\begin{equation}
    R=\mathbb R_{\geq 0} \ ,
    \qquad
    0\colon \{*\}\to \mathbb R_{\geq 0}
\end{equation}
to be the point $0$, and for every $\varepsilon>0$ consider the canonical inclusion
\begin{equation}
    r_\varepsilon\colon R_\varepsilon=[0,\varepsilon)
    \hookrightarrow
    \mathbb R_{\geq 0} \ .
\end{equation}
The intervals $R_\varepsilon$ form a neighbourhood basis of $0$ in
$\mathbb R_{\geq 0}$. The preorder on maps to $R$ is the pointwise order: for continuous maps
\begin{equation}
    h,k\colon Y\to \mathbb R_{\geq 0} \ ,
\end{equation}
one writes $h\leq k$ if and only if $h(y)\leq k(y)$ for every $y\in Y$. This preorder is preserved by precomposition. Moreover, each $R_\varepsilon$ contains $0$ and is downward closed. Thus these data form
a Lyapunov measurement datum.

Now let
\begin{equation}
    L\colon M\to\mathbb R_{\geq 0}
\end{equation}
be a classical Lyapunov function satisfying
\begin{equation}
    L(x_\infty)=0  \qquad \text{and} \qquad  L(f(x))\leq L(x)
\end{equation}
for all $x\in M$. In the categorical notation, this is a family with a single
component
\begin{equation}
    L_*\colon X(*)\to R \ .
\end{equation}
Its sublevel stagewise neighbourhoods are
\begin{equation}
    U^\varepsilon_*
    =
    M\times_{\mathbb R_{\geq 0}}[0,\varepsilon)
    =
    \{\,x\in M\mid L(x)<\varepsilon\,\} \ .
\end{equation}
Since the intervals $[0,\varepsilon)$ form a neighbourhood basis of $0$ in
$\mathbb R_{\geq 0}$, the sublevel sets $U^\varepsilon_*$ form a neighbourhood basis of $x_\infty$ induced by the measurement $L$. The Lyapunov inequality implies
\begin{equation}
    f(U^\varepsilon_*)\subseteq U^\varepsilon_* \ ,
\end{equation}
and hence
\begin{equation}
    f^n(U^\varepsilon_*)\subseteq U^\varepsilon_*
\end{equation}
for every $n\in\mathbb N$. Thus the sublevel neighbourhoods are
forward-invariant. Therefore they form a Lyapunov basis for the neighbourhood
filter generated by the sublevel sets of $L$. With respect to this
neighbourhood filter, $L$ is a categorical Lyapunov function for $\iota$, and
the categorical Lyapunov stability principle recovers the usual sublevel-set
form of Lyapunov stability.

For convergence, equip $B\mathbb N$ with the eventuality filter generated by
\begin{equation}
    \mathcal E_N
    =
    \{\,n\colon *\to *\mid n\geq N\,\} \ ,
    \qquad N\in\mathbb N \ .
\end{equation}
Thus a property holds eventually precisely when it holds for all $n\geq N$ for some $N$. Now consider an initial state $x \in M$, viewed as a generalised state
\begin{equation}
    x\colon \{*\}\to M \ .
\end{equation}
The Lyapunov decay condition says that, for every $\varepsilon>0$, the
composite
\begin{equation}
    \{*\}\xrightarrow{x}M\xrightarrow{f^n}M\xrightarrow{L}
    \mathbb R_{\geq 0}
\end{equation}
eventually factors through
\begin{equation}
    [0,\varepsilon)\hookrightarrow \mathbb R_{\geq 0} \ .
\end{equation}
In concrete terms, this means that for every $\varepsilon>0$ there exists
$N\in\mathbb N$ such that, for all $n\geq N$,
\begin{equation}
    L(f^n(x))<\varepsilon \ .
\end{equation}

Thus the categorical Lyapunov decay condition recovers the classical
Lyapunov condition for convergence: the Lyapunov value along the trajectory
tends to zero, hence the trajectory eventually enters every sublevel
neighbourhood of the equilibrium.
\end{example}

\begin{example}[Guarded hybrid systems]\label{ex: lyapunov guarded hybrid}
Consider a guarded hybrid system as in Example~\ref{ex: hybrid guarded}. For
each mode $q\in Q$ and time $t\in\mathbb R$, let $X(q,t)$ be the
corresponding state space. Suppose that continuous evolution in mode $q$ from
time $s$ to time $t$ is given by a map
\begin{equation}
    \Phi^q_{s,t}\colon X(q,s)\to X(q,t) \ ,
    \qquad s\leq t \ ,
\end{equation}
and that an admissible jump
\begin{equation}
    e\colon q\to q'
\end{equation}
at time $t$ is specified by a guard and reset map
\begin{equation}
    g^e_t\colon G^e_t\hookrightarrow X(q,t),
    \qquad
    R^e_t\colon G^e_t\to X(q',t) \ .
\end{equation}

Let $ \iota\colon \Delta I\Rightarrow X$ be an invariant subsystem. Classically, a Lyapunov function is a family of continuous functions
\begin{equation}
    L_{q,t}\colon X(q,t)\to\mathbb R_{\geq 0},
    \qquad q\in Q,\ t\in\mathbb R \ ,
\end{equation}
satisfying the following conditions. First, $L$ vanishes on the invariant subsystem: for every stage $(q,t)$,
\begin{equation}
    L_{q,t}\circ \iota_{q,t}
    = 0 \ .
\end{equation}
Second, $L$ is non-increasing along both kinds of hybrid evolution. Along
continuous evolution, one requires
\begin{equation}
    L_{q,t}\bigl(\Phi^q_{s,t}(x)\bigr)
    \leq
    L_{q,s}(x)
\end{equation}
for all $x\in X(q,s)$ and all $s\leq t$. Along a guarded jump
$e\colon q\to q'$, one requires
\begin{equation}
    L_{q',t}(R^e_t(y))
    \leq
    L_{q,t}(g^e_t(y))
\end{equation}
for all $y\in G^e_t$.

We now view these data categorically. We use the same Lyapunov measurement datum as in Example~\ref{ex: Lyapunov discrete autonomous}. Then the family $L_{q,t}$ is precisely a family of components of a categorical Lyapunov function with values in this measurement datum.
The vanishing condition above is the categorical equation
\begin{equation}
    L_{q,t}\circ \iota_{q,t}
    =
    0\circ {!}_I \ ,
\end{equation}
and the two inequalities are exactly the non-increasing condition along the
generating morphisms.

For $\varepsilon>0$, the sublevel stagewise neighbourhoods are
\begin{equation}
    U^\varepsilon_{q,t}
    =
    X(q,t)\times_{\mathbb R_{\geq 0}}[0,\varepsilon)
    =
    \{\,x\in X(q,t)\mid L_{q,t}(x)<\varepsilon\,\} \ .
\end{equation}
The continuous-evolution inequality gives
\begin{equation}
    \Phi^q_{s,t}(U^\varepsilon_{q,s})
    \subseteq
    U^\varepsilon_{q,t} \ .
\end{equation}
The guarded-jump inequality gives
\begin{equation}
    R^e_t
    \bigl(
        (g^e_t)^{-1}(U^\varepsilon_{q,t})
    \bigr)
    \subseteq
    U^\varepsilon_{q',t} \ .
\end{equation}
Equivalently, using the guard as a subspace of $X(q,t)$, this says
\begin{equation}
    R^e_t
    \bigl(
        G^e_t\cap U^\varepsilon_{q,t}
    \bigr)
    \subseteq
    U^\varepsilon_{q',t} \ .
\end{equation}
Thus the sublevel neighbourhoods are preserved by both continuous evolutions
and admissible guarded jumps. Hence they form forward-invariant stagewise
neighbourhoods in the categorical sense.

Let $\mathcal N_X(I)$ be the stagewise neighbourhood filter generated by the
sublevel neighbourhoods
\begin{equation}
    U^\varepsilon=
    \bigl(U^\varepsilon_{q,t}\hookrightarrow X(q,t)\bigr)_{q,t} \ ,
    \qquad
    \varepsilon>0 \ .
\end{equation}
Then the family $\{U^\varepsilon\}_{\varepsilon>0}$ is, by construction, a Lyapunov basis for $\mathcal N_X(I)$. Therefore the
categorical Lyapunov stability principle gives stability of $I$ under all
admissible hybrid evolutions.

For convergence, consider an initial state $x_0$ at stage $X(q_0,t_0)$. We equip the source category with the eventuality filter at
$(q_0,t_0)$ generated by
\begin{equation}
    \mathcal E_{q_0,t_0}
    =
    \{\,\alpha\colon (q_0,t_0)\to(q,t)
        \mid t\geq T\,\} \ ,
    \qquad T\geq t_0 \ .
\end{equation}
A property of admissible evolutions out of $(q_0,t_0)$ holds
eventually if it holds after sufficiently large terminal time. For any admissible morphism
\begin{equation}
    \alpha_t \colon (q_0,t_0)\to (q(t),t)
\end{equation}
in the source category, we write
\begin{equation}
    x(t)
    \coloneqq
    X(\alpha_t)(x_0)
    \in
    X\big(q(t),t\big) \ .
\end{equation}
The Lyapunov decay condition for the initial state $x_0$ says that, for
every $\varepsilon>0$, one eventually has
\begin{equation}
    L_{q(t),t}\big(x(t)\big)<\varepsilon \ .
\end{equation}
Equivalently,
\begin{equation}
    L_{q(t),t}\big(x(t)\big)\xrightarrow{t \to \infty} 0 \ .
\end{equation}
Thus the categorical Lyapunov convergence
criterion recovers the classical Lyapunov condition for convergence of $x_0$ to the invariant subsystem $I$.
\end{example}

\end{document}